\def\RT{{\sf RT}}
\def\N{\mathbb N}
\def\A{\mathcal A}
\def\B{\mathcal B}
\def\R{\mathcal R}
\def\uu{\mathbf{u}}
\def\vvv{\mathbf{v}}
\theoremstyle{definition}
\newtheorem{definition}{Definition}
\newtheorem{corollary}[definition]{Corollary}
\newtheorem{remark}[definition]{Remark}
\newtheorem{example}[definition]{Example}
\theoremstyle{plain}
\newtheorem{theorem}[definition]{Theorem}
\newtheorem{proposition}[definition]{Proposition}
\newtheorem{lemma}[definition]{Lemma}
\title{The repetition threshold of episturmian sequences}
\author{L\!'ubom\'ira Dvo\v r\'akov\'a \footnote{Czech Technical University in Prague, Czech Republic.\\
e-mail \href{mailto:lubomira.dvorakova@fjfi.cvut.cz}{\tt lubomira.dvorakova@fjfi.cvut.cz}.}
\and
Edita Pelantov\'a \footnote{Czech Technical University in Prague, Czech Republic.\\ e-mail \href{mailto:edita.pelantova@fjfi.cvut.cz}{\tt edita.pelantova@fjfi.cvut.cz}.}}
\begin{document}

\maketitle

\begin{abstract} 
The repetition threshold of a class $C$ of infinite $d$-ary sequences is the smallest real number $r$  such that in the class $C$ there exists a~sequence that avoids $e$-powers for all $e> r$.
This notion  was introduced by  Dejean in 1972 for the class of all sequences over a $d$-letter alphabet. Thanks to the effort of many authors over more than 30 years, the precise value of the repetition threshold in this class is known for  every $d \in \N$. 
The repetition threshold for the class of Sturmian sequences was determined by Carpi and de Luca in 2000.
Sturmian sequences may be equivalently defined in various ways, therefore there exist many generalizations to larger alphabets.  
Rampersad, Shallit and Vandome in 2020 initiated a study of the repetition threshold for the class of balanced sequences -- one of the possible generalizations of Sturmian sequences. Here, we focus on the class of $d$-ary episturmian sequences -- another generalization of Sturmian sequences introduced by Droubay, Justin and Pirillo in 2001.  We show that the repetition threshold of this class is reached by the $d$-bonacci sequence and its value equals $2+\frac{1}{t-1}$, where $t>1$ is the unique positive root of the polynomial $x^d-x^{d-1}-\cdots -x-1$.
\end{abstract}

\section{Introduction}\label{sec:Introduction}

In combinatorics on words, the topics that have been attracting interest of researchers from the very beginning up to nowadays are {\em repetition, critical exponent} and {\em repetition threshold}. A word $v$ is an $e$-{\em power} of a word $u$ if $v$ is a~prefix of the infinite periodic sequence $uuu\cdots = u^\omega$ and  $e=|v|/|u|$, where $|v|$, resp. $|u|$ denotes the length of $v$, resp. $u$. We write $v=u^e$.  For instance, a Czech word $kapka$ (drop) can be written in this formalism  as  $(kap)^{5/3}$.

The {\em critical exponent}  $E(\uu )$ of an infinite sequence $\uu$ is defined as
$$E(\uu) =\sup\{e \in \mathbb{Q}: \  u^e \  \text{is a factor of   } \uu  \  \text{for a non-empty word} \  u\}
$$
and the {\em repetition threshold} as
$$ \RT(d) = \inf\{ E(\uu) : \uu \text{\ is a $d$-ary sequence} \}\,.
$$ 
The repetition threshold may be interpreted in the following way: $\RT(d)$ is the smallest real number such that there exists a $d$-ary sequence that avoids  $e$-powers  for every exponent $e > RT(d)$. 

The result by Axel Thue from 1912 stating that $\RT(2) = 2$ is considered to be the origin of combinatorics on words. 
Dejean \cite{Dej72} showed that $\RT(3)=7/4$ and conjectured the remaining values $\RT(4)=7/5$ (proved by Pansiot \cite{Pan84c}) and $\RT(d)=1+\frac{1}{d-1}$ for $d\geq 5$ (proved by efforts of many authors \cite{Mou92,Car07,CuRa11,Rao11}). 

Cassaigne in \cite{Ca2008} introduces  also an asymptotic version of the critical exponent  in the following way:
The {\em asymptotic critical exponent} $E^*(\uu)$  is defined to be $\infty$  if $E(\uu) = \infty$ and 
$$E^*(\uu) =\lim_{n\to \infty}\sup\{e \in \mathbb{Q}: \  u ^e \  \text{is a factor of  } \uu  \  \text{for some }  u \ \text{of length} \geq  n  \}\,,$$
 otherwise.~\footnote{Let us emphasize that the terminology is not unique: the asymptotic critical exponent is called asymptotic index in~\cite{Ca2008}, and it is even called critical exponent in~\cite{Van2000, JuPi2002, GlJu2009}.} 
 We define the {\em asymptotic repetition threshold} as $$\RT^*(d) =\inf\{E^*(\uu): \uu \text{\ is a $d$-ary sequence}\}\,. $$
In contrast to the repetition threshold, for the asymptotic repetition threshold we have $\RT^*(d) =1 \ \text{for all $d\geq 2$}\,,$ see~\cite{Ca2008}.

An important parameter of any class $C$ of sequences is the infimum of critical exponents of sequences from this class.
For the class of all $d$-ary sequences this number equals $\RT(d)$ by definition; hence we write $\RT(C)$ to denote this number and refer to it as the \emph{repetition threshold for the class $C$}.
Let us briefly survey what is known about $\RT(C)$ for different classes $C$.

The most studied aperiodic binary sequences are Sturmian sequences. The most prominent example is the Fibonacci sequence, which may be obtained by a repeated application of the rewriting rule $\tt 0 \mapsto \tt 01$  and $\tt 1\mapsto \tt 0$ on the starting letter $\tt 0$,
$$
\tt 0 \mapsto \tt 01 \mapsto \tt 010\mapsto \tt 01001 \mapsto \tt 01001010 \mapsto \tt 0100101001001 \cdots 
$$
The Fibonacci sequence has the privileged position among all Sturmian sequences, see~\cite{Ca2008}. In particular, in the class of Sturmian sequences, the Fibonacci sequence has the minimal value of both the critical and the asymptotic critical exponent. As shown by Carpi and de Luca~\cite{CaDeLu00} for the repetition threshold and Vandeth~\cite{Van2000} for the asymptotic repetition threshold, in the class of Sturmian sequences, $\RT(C)=\RT^*(C)=2+\frac{1+\sqrt{5}}{2}$.

Recently, several groups of researchers have focused on the repetition threshold of some special classes: balanced sequences, episturmian sequences and  sequences  rich in palindromes. All these classes may be understood as a generalization of Sturmian sequences since each Sturmian sequence is balanced, rich in palindromes and episturmian.     
Let us underline that the class of episturmian sequences is a subclass of the class of sequences rich in palindromes.

Rampersad, Shallit and Vandomme~\cite{RSV19} suggested to study the repetition threshold for the class $C_d$ of $d$-ary balanced sequences. The following results have been proved so far:
\begin{itemize}
    \item $\RT(C_2)=2+\frac{1+\sqrt{5}}{2}$ \cite{CaDeLu00};
    \item $\RT(C_3)=2+\frac{\sqrt{2}}{2}$ and $\RT(C_4)=1+\frac{1+\sqrt{5}}{4}$ \cite{RSV19};
    \item $\RT(C_d)=1+\frac{1}{d-3}$ for $5 \leq d\leq 10$ \cite{Bar20, BaSh19, DDP21};
    \item $\RT(C_d)=1+\frac{1}{d-2}$ for $d=11$ and all even numbers $d \geq 12$ \cite{DvOpPeSh2022}.
    \end{itemize}
It remains as an open problem to prove the conjecture $\RT(C_d)=1+\frac{1}{d-2}$ also for all odd numbers $d\geq 13$.

For the asymptotic repetition threshold in this class, the situation is more interesting than for general $d$-ary sequences, see~\cite{DOP2022} and \cite{DvPe2022}.   
\begin{itemize}
    \item $\RT(C_d)= \RT^*(C_d)$ for $d =  2,3,4,5$;
 \item $\RT(C_d)> \RT^*(C_d)$ for $d\geq 6$; 
    \item $1+ \frac{1}{2^{d-2}}< \RT^*(C_d)< 1+ \frac{\tau^3}{2^{d-3}}$, where $\tau = \frac{{1}+\sqrt{5}}{2}$;
    \item the precise value of $\RT^*(C_d)$ is known only for $d \leq 10$. 
\end{itemize}
Sequences rich in palindromes have not been sufficiently studied and characterized so far. Despite of this fact, Currie, Mol and Rampersad~\cite{CuMoRa2020} determined the values of the repetition threshold and the asymptotic repetition threshold for the binary alphabet.
If $C_2 = $ the set of all binary sequences rich in palindromes, then 
$$ \RT(C_2)= \RT^*(C_2) =2+\frac{\sqrt{2}}{2} \sim 2.707\,.$$
Moreover, Baranwal and Shallit~\cite{BaSh19+} found a lower bound $\RT(C_3)\geq  9/4$ for the class $C_3$ of ternary sequences rich in palindromes.

In this paper, we will study the class of $d$-ary episturmian sequences.  They were introduced by  Droubay, Justin and Pirillo~\cite{DrJuPi2001}.  For $d=2$, this class coincides with the class of binary balanced sequences, hence the repetition threshold is well known.  
The paper~\cite{DL23} is devoted to a particular subclass of episturmian sequences -- regular Arnoux-Rauzy sequences. It is proved there that the $d$-bonacci sequence $\uu_d$ has the minimal critical and asymptotic critical exponent among all regular $d$-ary Arnoux-Rauzy sequences. The value of $E^*(\uu_d)$ is derived, however it is only conjectured ibidem that $E(\uu_d)=E^*(\uu_d)$. Previously, Glen and Justin~\cite{GlJu2009} determined the value of $E^*(\uu_d)$ using a formula from Justin and Pirillo~\cite{JuPi2002}. We have to point out that they call {\em critical exponent} what is called here {\em asymptotic critical exponent}!

Let us recall that $\uu_d$ may be obtained by a repeated application of the rewriting rules
$\tt 0 \mapsto \tt 01$,  \  $\tt 1\mapsto \tt 02$, \    $\tt 2\mapsto \tt 03, \  \ldots\ , \ \tt d-2 \mapsto \tt 0(d-1)$ 
\ and  \ $\tt d-1 \mapsto \tt 0$.   

\noindent In this paper, we will derive that if $C_d = $ the set of all $d$-ary  episturmian  sequences, 
then, for every $d \in \N, \ d\geq 2$,
$$ \RT(C_d)= \RT^*(C_d) =  2+\frac{1}{t-1}, 
$$
where $t>1$ is the only positive root of the polynomial $x^d-x^{d-1}-\dots -x-1$.  In the proof, we will use the fact that the class $C_d$ is an $S$-adic system, see \cite{BeDe14},  where the set $S$ contains $d$ morphisms. Each morphism is represented by its incidence matrix. We transform the task to determine the asymptotic repetition threshold to a task related to the joint spectral radius of the set of $d$ incidence matrices.       

\medskip 

The paper is organized as follows. After Section 2 containing preliminaries, we derive in Section 3 a matrix formula for the critical exponent and the asymptotic critical exponent of Arnoux-Rauzy sequences. In Section 4 we provide the value of the asymptotic critical exponent of the $d$-bonacci sequence and prove that the critical exponent has the same value. In Section 5 we show that the $d$-bonacci sequence has the longest bispecial factors and we determine the asymptotic behaviour of their lengths. It enables us in Section 6  to derive  the repetition threshold  and asymptotic repetition threshold for the class of $d$-ary episturmian sequences.

\section{Preliminaries}
\subsection{Basic notions}

An \textit{alphabet} $\mathcal A$ is a finite set and its elements are called \textit{letters}. Throughout this paper, we use ${\mathcal A}=\{\tt 0,1,\dots, d-1\}$.
A \textit{word} $u$ over $\mathcal A$ of \textit{length} $n$ is a finite string $u = u_0 u_1 \cdots u_{n-1}$, where $u_j\in\mathcal A$ for all $j \in \{0,1,\dots, n-1\}$. The length of $u$ is denoted $|u|$ and $|u|_{\tt i}$ denotes the number of occurrences of the letter ${\tt i}\in\mathcal A$ in the word $u$. The \textit{Parikh vector} $ \vec{u} \in \N^{d}$ is the vector defined as ${\vec u } = (|u|_{\tt 0}, |u|_{\tt 1}, \dots, |u|_{\tt d-1})^{ T}$.
The set of all finite words over $\A$ is denoted $\A^*$. The set $\A^*$ equipped with concatenation as the operation forms a monoid with the \textit{empty word} $\varepsilon$ as the neutral element. 

A \textit{sequence} $\uu$ over $\A$ is an infinite string $\uu = u_0 u_1 u_2 \cdots$ of letters $u_j \in \A$ for all $j \in \N$.  A \textit{factor} of a sequence $\uu = u_0 u_1 u_2 \cdots$ is a word $w\in\A^*$ such that $w = u_j u_{j+1} u_{j+2} \cdots u_{\ell-1}$ for some $j, \ell \in \N$, $j \leq \ell$. The integer $j$ is called an \textit{occurrence} of the factor $w$ in the sequence $\uu$. If $j=0$, then $w$ is a \textit{prefix} of $\uu$.

The \textit{language} $\mathcal{L}(\uu)$ of a sequence $\uu$ is the set of factors occurring in $\uu$.
The language $\mathcal{L}(\uu)$ is called \textit{closed under reversal} if for each factor $w=w_0w_1\cdots w_{n-1}$, its mirror image $w_{n-1}\cdots w_1 w_0$ is also a factor of $\uu$.
A~factor $w$ of a sequence $\uu$ is \textit{right special} if $w{\tt i}, w{\tt j} \in \mathcal{L}(\uu)$ for at least two distinct letters ${\tt i, j} \in \A$. A \textit{left special} factor is defined analogously.
A factor is called \textit{bispecial} if it is both left and right special. 

A sequence $\uu$ is \textit{recurrent} if each factor of $\uu$ has at least two occurrences in $\uu$. Moreover, a recurrent sequence $\uu$ is \textit{uniformly recurrent} if the distances between the consecutive occurrences of each factor in $\uu$ are bounded.
A sequence $\uu$ is \textit{eventually periodic} if there exist words $w \in \A^*$ and $v \in \A^* \setminus \{\varepsilon\}$ such that $\uu$ can be written as $\uu = wvvv \cdots = wv^\omega$. In particular, $\uu$ is \textit{periodic} if $w=\varepsilon$. If $\uu$ is not eventually periodic, $\uu$ is called \textit{aperiodic}.

Consider a factor $w$ of a recurrent sequence $\uu = u_0 u_1 u_2 \cdots$. Let $j < \ell$ be two consecutive occurrences of $w$ in $\uu$. Then the word $u_j u_{j+1} \cdots u_{\ell-1}$ is a \textit{return word} to $w$ in $\uu$.
The set of all return words to $w$ in $\uu$ is denoted by $\R_\uu(w)$. The set $\R_\uu(w)$ is finite if and only if $\uu$ is uniformly recurrent.
A \textit{morphism} is a map $\psi: \A^* \to \B^*$ such that $\psi(uv) = \psi(u)\psi(v)$  for all words $u, v \in \A^*$.
The morphism $\psi$ is called \textit{non-erasing} if $\psi(\tt i)\not =\varepsilon$ for each ${\tt i} \in \A$.
Morphisms can be naturally extended to sequences by setting
$\psi(u_0 u_1 u_2 \cdots) = \psi(u_0) \psi(u_1) \psi(u_2) \cdots\,$.
A \textit{fixed point} of a morphism $\psi:  \A^* \to  \A^*$ is a sequence $\uu$ such that $\psi(\uu) = \uu$.
We associate to a morphism $\psi: \A^* \to  \A^*$ the \textit{(incidence) matrix} $M_\psi$ defined for each $k,j \in \{0,1,\dots, d-1\}$ as $[M_\psi]_{kj}=|\psi(\tt j)|_{\tt k}$. 

By definition, we have for each $u \in \A^*$ the following relation for the Parikh vectors $\vv{\psi(u)}=M_\psi\vec{u}$.
Similarly as for matrices, we index the components of vectors from zero. We denote $\vec e_i$ the $i$-th vector of the canonical basis of $\mathbb R^d$ for $i \in \{0,1,\dots, d-1\}$, i.e., the $i$-th component of $\vec e_i$ equals one, while all other components are zero. $I$ denotes the identity matrix and ${\vec 1 }=(1,1,\dots, 1)^T \in \mathbb N^d$. To any permutation $\pi$ on $\{0,1,\dots, d-1\}$ we associate the permutation matrix $P \in \mathbb N^d \times \mathbb N^d$ defined by $\vec e_i^{\ T}P=\vec e_{\pi(i)}^{\ T}$.

\subsection{Critical exponent}
The (asymptotic) critical exponent and the (asymptotic) repetition threshold have been defined already in Introduction. 
In this section, we recall first a useful formula from~\cite{DDP21} for the computation of the critical exponent and the asymptotic critical exponent of uniformly recurrent aperiodic sequences. It is based on the lengths of bispecial factors and their shortest return words.

\begin{theorem}[\rm\cite{DDP21}]
\label{thm:FormulaForE}
Let $\uu$ be a uniformly recurrent aperiodic sequence.
Let $(b_n)_{n\in\N}$ be the sequence of all bispecial factors in $\uu$ ordered by length.
For every $n \in \N$, let $r_n$ be the shortest return word to the bispecial factor $b_n$ in $\uu$.
Then
$$
E(\uu) = 1 + \sup\limits_{n \in \N} \left\{ \frac{|b_n|}{|r_n|} \right\}
\qquad \text{and} \qquad
E^*(\uu) = 1 + \limsup\limits_{n \to \infty}  \frac{|b_n|}{|r_n|} .
$$
\end{theorem}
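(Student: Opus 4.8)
The plan is to prove Theorem~\ref{thm:FormulaForE} by relating the occurrences of a repetition $u^e$ in a uniformly recurrent aperiodic sequence $\uu$ to the bispecial factors and their return words. The key classical fact I will invoke is that any ``maximal'' repetition (a factor $w$ with $|w|/|p| = e$ maximal for its period $p$, in the sense that it cannot be extended to the right or left while keeping the same period) is intimately tied to bispecial factors: if $u^e$ occurs in $\uu$ with $e>1$ and is not right-extendable with the same period, then the longest prefix $b$ of $u^e$ of length $|u^e| - |u|$ (equivalently, $u^e$ with its last $|u|$ letters removed, i.e. the ``overlapping part'') must be a right special factor, and symmetrically on the left, so a suitable central piece is bispecial; moreover $u$ is then a return word to that bispecial factor. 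Conversely, given a bispecial factor $b_n$ and its shortest return word $r_n$, the word $r_n^{\,|b_n|/|r_n|+1}$ (appropriately truncated) is a factor of $\uu$ realizing exponent $1 + |b_n|/|r_n|$.

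First I would fix notation and reduce to the study of factors $v$ that are \emph{maximal repetitions}: $v = u^e$ with $u$ primitive, $e = |v|/|u| > 1$, and $v$ cannot be extended by one letter on either side while preserving the period $|u|$. Every repetition occurring in $\uu$ is contained in such a maximal one, so $E(\uu)$ and $E^*(\uu)$ are unchanged if the suprema are restricted to maximal repetitions. Second, for such a maximal $v = u^e$, I would let $w$ be the word obtained from $v$ by deleting its last $|u|$ letters (the length-$(|v|-|u|)$ prefix of $v$), and argue that $w$ is bispecial: non-right-extendability of $v$ with period $|u|$ gives, by a standard overlap argument using that $\uu$ has at least two letters following $w$ appearing in different continuations, that $w$ is right special; a mirror argument, or rather the fact that $v$ shifted is also not left-extendable, gives that $w$ is left special. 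Then $u$ is a return word to $w$, hence $|u| \ge |r_{w}|$ where $r_w$ is the shortest return word to the bispecial factor $w$; since $|v| = |w| + |u|$, this yields $e = |v|/|u| = 1 + |w|/|u| \le 1 + |w|/|r_w|$. Taking suprema over all maximal repetitions gives $E(\uu) \le 1 + \sup_n |b_n|/|r_n|$, and the same estimate applied only to repetitions whose period has length $\ge N$ gives the $\limsup$ inequality for $E^*(\uu)$.

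For the reverse inequalities I would argue that each bispecial factor $b_n$ together with its shortest return word $r_n$ actually produces a repetition of the claimed exponent. Since $r_n$ is a return word to $b_n$, the word $r_n b_n$ occurs in $\uu$ and $b_n$ is both a prefix and a suffix of $r_n b_n$; a short induction on the number of consecutive occurrences of $b_n$ (or directly the standard ``fine and Wilf''-type synchronization) shows that the factor $r_n^{\,k} b_n$ occurs in $\uu$, where $k = \lceil |b_n|/|r_n| \rceil$, and that this factor has period $|r_n|$, hence realizes exponent at least $(|r_n^{\,k} b_n|)/|r_n| \ge 1 + |b_n|/|r_n|$ once one truncates to the exact length $|b_n| + |r_n|$. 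Therefore $E(\uu) \ge 1 + |b_n|/|r_n|$ for every $n$, giving $E(\uu) \ge 1 + \sup_n |b_n|/|r_n|$, and restricting to large $n$ yields the matching $\limsup$ bound for $E^*(\uu)$. Combining the two directions proves the theorem.

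The main obstacle I expect is the careful bookkeeping in the first direction: verifying that the truncated word $w$ obtained from a maximal repetition is genuinely \emph{bispecial} rather than merely right special, and handling the boundary case $e$ close to $1$ (where $w$ may be short or empty) and the case where the primitive root $u$ is longer than $|w|$, so that $w$ ``sits inside'' a single period. One must be precise about what ``maximal repetition'' means (maximal in the sense of not extendable with the same period, which is the notion that makes the correspondence with bispecial factors exact) and ensure that the shortest return word $r_n$ — not an arbitrary return word — is the right quantity; this is where uniform recurrence is used, guaranteeing $\R_\uu(b_n)$ is finite and $r_n$ well defined. The rest is a routine but delicate combinatorics-on-words argument of Fine–Wilf type.
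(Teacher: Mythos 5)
This theorem is not proved in the paper at all --- it is imported from \cite{DDP21} --- so your proposal can only be measured against the standard argument there; your architecture (extract a bispecial factor from a repetition that is maximal for its period, and conversely use return words to bispecial factors to build repetitions) is exactly that argument, so the plan is sound. Two of your intermediate claims, however, are false as stated. First, the period-length prefix $u$ of a maximal repetition need not be a return word to the bispecial factor $w$: the two occurrences of $w$ at distance $|u|$ need not be consecutive (in $\mathtt{ababaaba}=(\mathtt{ababa})^{8/5}$ the word $w=\mathtt{aba}$ occurs at positions $0,2,5$). What is true, and suffices, is that $u$ is a nonempty concatenation of return words to $w$, whence $|u|\ge |r_w|$; so this step is repairable on the spot. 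Second, the claim that $r_n^{\,k}b_n$ with $k=\lceil |b_n|/|r_n|\rceil$ occurs in $\uu$ is false in general: in the Fibonacci sequence the bispecial factor $b=\mathtt{010}$ has shortest return word $r=\mathtt{01}$, and $(\mathtt{01})^2\mathtt{010}=\mathtt{0101010}$ is not a factor. Fortunately this detour is unnecessary: the complete return word $r_nb_n$, which you correctly observe has $b_n$ as a prefix and a suffix, has period $|r_n|$ and is a prefix of $r_n^{\omega}$, so it already realizes the exponent $1+|b_n|/|r_n|$.

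The genuinely missing ingredients are the places where the hypotheses ``uniformly recurrent and aperiodic'' must be invoked. (i) Your reduction to maximal repetitions presupposes that a factor with period $p$ cannot be extended indefinitely with the same period; this needs the observation that otherwise all powers of a word would be factors of $\uu$ and minimality would force $\uu$ to be periodic. (ii) More seriously, your lower bound $E^*(\uu)\ge 1+\limsup_{n}|b_n|/|r_n|$ requires the witnesses $r_nb_n$ to have periods of unbounded length, i.e.\ $|r_n|\to\infty$; ``restricting to large $n$'' does not give this, since a long bispecial factor could a priori admit a short return word. One must argue (again from aperiodicity plus uniform recurrence: a return word of length at most $K$ to $b_n$ makes $b_n$ a factor with period at most $K$) that $|r_n|\to\infty$. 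Symmetrically, for the upper bound on $E^*$ you need the bispecial factors extracted from repetitions of long period to have unbounded length, which is automatic when the exponents stay bounded away from $1$ but should be said, with the degenerate case $E^*(\uu)=1$ handled separately. With these repairs your proof is correct and coincides with the proof in \cite{DDP21}.
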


Second, we derive a relation between the asymptotic critical exponents of a~sequence and its morphic image. In order to do that we have to recall the definition of uniform letter frequencies.
Let $\uu$ be a~sequence over an alphabet $\mathcal A$ and $\tt i$ a letter in $\mathcal A$. Then the {\em uniform frequency} $f_{\tt i}$ of the letter $\tt i$ is equal to $\alpha$ if for any sequence $(w^{(n)})$ of factors of $\uu$ with increasing lengths 
$$\alpha=\lim_{n\to \infty}\frac{|w^{(n)}|_{\tt i}}{|w^{(n)}|}\,.$$

\begin{lemma}\label{lem:morphismE}
Let $\vvv$ be a sequence over an alphabet $\mathcal A$ such that the uniform frequency of every letter exists. Then $E^*(\psi(\vvv))\geq E^*(\vvv)$ for any non-erasing morphism $\psi:{\mathcal A}^* \mapsto {\mathcal B}^*$.
\end{lemma}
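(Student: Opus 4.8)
The plan is to transport high fractional powers from $\vvv$ into $\psi(\vvv)$ with almost the same exponent and with bases of growing length, so that the $\limsup$ built into $E^*(\psi(\vvv))$ cannot fall below $E^*(\vvv)$. Two degenerate cases I would dispose of first. If $E^*(\vvv)=1$ there is nothing to prove, since $E^*$ is always at least $1$. If $E(\vvv)=\infty$, then $\vvv$ contains powers $u^m$ with $m$ and $|u|$ simultaneously arbitrarily large; applying $\psi$ gives factors $\psi(u)^m$ of $\psi(\vvv)$ with $|\psi(u)|\ge|u|$, whence $E^*(\psi(\vvv))=\infty$ as well. So from now on I assume $1<E^*(\vvv)<\infty$.

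The first ingredient is a length comparison. For any factor $z$ of $\vvv$ one has $|\psi(z)|=\sum_{{\tt i}\in\mathcal A}|z|_{\tt i}\,|\psi({\tt i})|$, so $|\psi(z)|/|z|=\sum_{{\tt i}\in\mathcal A}\frac{|z|_{\tt i}}{|z|}\,|\psi({\tt i})|$. Because the uniform frequency $f_{\tt i}$ of each letter exists, this ratio tends to the constant $F:=\sum_{{\tt i}\in\mathcal A}f_{\tt i}\,|\psi({\tt i})|\ge 1$ along every sequence of factors of $\vvv$ whose lengths tend to infinity. I would also record the elementary bounds $|z|\le|\psi(z)|\le K|z|$ with $K:=\max_{{\tt i}\in\mathcal A}|\psi({\tt i})|$, valid since $\psi$ is non-erasing.

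Next, from the definition of $E^*$, I would pick words $u^{(k)}$ with $|u^{(k)}|\to\infty$ and exponents $e_k\ge1$ with $e_k\to E^*(\vvv)$ such that $(u^{(k)})^{e_k}$ is a factor of $\vvv$. Writing $e_k=m_k+f_k$ with $m_k=\lfloor e_k\rfloor$ and letting $u'_k$ be the prefix of $u^{(k)}$ of length $f_k|u^{(k)}|$, one has $(u^{(k)})^{e_k}=(u^{(k)})^{m_k}u'_k$. Since $u'_k$ is a prefix of $u^{(k)}$, its image $\psi(u'_k)$ is a prefix of $\psi(u^{(k)})$, hence $\psi\bigl((u^{(k)})^{e_k}\bigr)=\psi(u^{(k)})^{m_k}\psi(u'_k)$ is a prefix of $\psi(u^{(k)})^{\omega}$, i.e. a factor of $\psi(\vvv)$ of the form $\psi(u^{(k)})^{e'_k}$ with
$$e'_k=m_k+\frac{|\psi(u'_k)|}{|\psi(u^{(k)})|}=e_k+\Bigl(\frac{|\psi(u'_k)|}{|\psi(u^{(k)})|}-\frac{|u'_k|}{|u^{(k)}|}\Bigr).$$

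The main obstacle — essentially the only non-routine point — is to show that the bracketed correction term tends to $0$. I would argue by passing to subsequences: when $|u'_k|/|u^{(k)}|\to0$, the bound $|\psi(u'_k)|/|\psi(u^{(k)})|\le K\,|u'_k|/|u^{(k)}|$ forces the term to $0$; when $|u'_k|/|u^{(k)}|$ stays bounded away from $0$, we have $|u'_k|\to\infty$, so the length comparison gives $|\psi(u^{(k)})|\sim F|u^{(k)}|$ and $|\psi(u'_k)|\sim F|u'_k|$, and the term again tends to $0$. Since every subsequence has a sub-subsequence on which it converges to $0$, it converges to $0$, and hence $e'_k\to E^*(\vvv)$. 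Finally, as $|\psi(u^{(k)})|\ge|u^{(k)}|\to\infty$, for each fixed $N$ and all large $k$ the factor $\psi(u^{(k)})^{e'_k}$ of $\psi(\vvv)$ has a base of length at least $N$; therefore $\sup\{e\in\mathbb Q:\ z^e\text{ is a factor of }\psi(\vvv)\text{ for some }z\text{ with }|z|\ge N\}\ge e'_k$ for such $k$, and letting $k\to\infty$ this supremum is at least $E^*(\vvv)$. Since $N$ is arbitrary, $E^*(\psi(\vvv))\ge E^*(\vvv)$.
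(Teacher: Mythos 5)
Your proof is correct and follows essentially the same route as the paper: push the near-extremal powers of $\vvv$ through $\psi$ and use the existence of uniform letter frequencies to show that $|\psi(z)|/|z|$ tends to the same constant $\vec 1^{\,T}M_\psi\vec f$ along both the bases and the repetitions, so the exponents are asymptotically preserved. The paper's write-up just avoids your integer-plus-fractional-part bookkeeping and the subsequence argument by working directly with the length ratio $|w^{(n)}|/|v^{(n)}|$, where $w^{(n)}$ is the whole fractional power and $v^{(n)}$ its base.
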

\begin{proof}
According to the definition of $E^*(\vvv)$, there exist sequences $\bigl(w^{(n)}\bigr)$ and $\bigl(v^{(n)}\bigr)$ such that 
\begin{enumerate}
\item $\lim\limits_{n\to \infty} |v^{(n)}| = \infty$; 
\item $w^{(n)}$ is a factor of $\vvv$ for each $n \in \N$; 
\item $w^{(n)}$ is a prefix of the periodic sequence $\bigl(v^{(n)}\bigr)^\omega$ for each $n \in \N$; 
\item $E^*(\vvv)=\lim\limits_{n\to \infty}\frac{|w^{(n)}|}{|v^{(n)}|}$.
\end{enumerate}

Obviously, $\psi(w^{(n)})$ is a factor of $\psi(\vvv)$ and $\psi(w^{(n)})$ is a prefix of  $\psi\bigl(\bigl(v^{(n)}\bigr)^\omega\bigr) = \bigl(\psi(v^{(n)})\bigr)^\omega$ for each $n \in \N$. As the morphism $\psi$ is non-erasing, 
$\lim |\psi(v^{(n)})| = \infty$. Consequently, 
\begin{equation}\label{eq:pravastrana}
 E^*(\psi(\vvv))\geq \lim_{n \to \infty}\frac{|\psi(w^{(n)})|}{|\psi(v^{(n)})|}.
\end{equation}
 To complete the proof, it suffices to show that the limit on the right-hand side is equal to $E^*(\vvv)$. Combining two simple facts: 
$\frac{|\psi(u)|}{|u|}={\vec 1 \hspace{0.01cm}}^TM_\psi \frac{\vec u}{|u|}$ for each word $u$ over $\mathcal A$ and $\lim\limits_{n\to \infty}\frac{\vec w^{(n)}}{|w^{(n)}|} = \lim\limits_{n\to \infty}\frac{\vec v^{(n)}}{|v^{(n)}|} = \vec f\,$, where $\vec f$ is the vector of the letter frequencies in $\vvv$, we obtain
$$
\lim_{n \to \infty}\frac{|\psi(w^{(n)})|}{|w^{(n)}|} =  {\vec 1 \hspace{0.01cm}}^TM_\psi \vec f   = \lim_{n \to \infty}\frac{|\psi(v^{(n)})|}{|v^{(n)}|} \,.
$$
Consequently, 
$$
\lim_{n \to \infty} \frac{|\psi(w^{(n)})|}{|\psi(v^{(n)})|} =\lim_{n \to \infty}  \frac{|\psi(w^{(n)})|}{|w^{(n)}|} \frac{|v^{(n)}|}{|\psi(v^{(n)})|} \frac{|w^{(n)}|}{|v^{(n)}|} \ =  \  E^*(\vvv)\,.$$
\end{proof}

\subsection{Episturmian sequences}

A sequence $\uu$ over $\mathcal A$ is {\em episturmian} if the language ${\mathcal L}(\uu)$ is closed under reversal and if there exists at most one right special factor $w$ of each length. An episturmian sequence $\uu$ is {\em standard} if all left special factors of $\uu$ are prefixes of $\uu$. For each episturmian sequence there exists a unique standard episturmian sequence with the same language.
If the sequence $\uu$ is episturmian and each right special factor $w$ has all possible right extensions, i.e., $w{\tt i} \in {\mathcal L}(\uu)$ for each letter ${\tt i} \in {\mathcal A}$, then $\uu$ is called {\em Arnoux-Rauzy} (AR for short). 
Binary AR sequences are called {\em Sturmian} sequences.
Episturmian sequences are uniformly recurrent~\cite{DrJuPi2001}. 

We use in the sequel the S-adic representation of $d$-ary episturmian sequences based on the following morphisms.
For every $\tt i\in\{\tt 0, \tt 1, \dots, \tt d-1\}$, let $\varphi_{\tt i}$ denote the morphism
$\varphi_{{\tt i}}\colon \{\tt 0,\tt 1 \dots, \tt d-1\}^*\to \{\tt 0,\tt 1 \dots, \tt d-1\}^*$ such that $\varphi_{\tt i}\colon \tt i \mapsto \tt i$ and $\varphi_{\tt i}\colon \tt j\mapsto \tt ij \quad \text{for every $\tt j\not =\tt i$.}$
The matrix $M_{\varphi_{\tt i}}$ of the morphism $\varphi_{{\tt i}}$ is thus defined, for $k,j \in \{0, 1,\dots, d-1\}$, \begin{equation}\label{eq:elementary_morphisms}
[M_{\varphi_{\tt i}}]_{kj}=\left\{ \begin{array}{rcl}
&1 &\text{for $k=i$};\\
&1 &\text{for $k=j$};\\
&0 &\text{otherwise}.\\
\end{array}\right.
\end{equation}
\begin{example}
To get a better idea, let us illustrate $M_{\varphi_{\tt i}}$ for $d=4$ and $i=2$:
$$M_{\varphi_{\tt 2}}=
\left(\begin{array}{cccc}
1&0&0&0\\
0&1&0&0\\
1&1&1&1\\
0&0&0&1
\end{array}\right)\,.$$
\end{example}
\begin{theorem}[\cite{DrJuPi2001}]\label{thm:directiveAR}
For each standard episturmian sequence over $\{\tt 0,\tt 1,\dots, \tt d-1\}$ there exists a unique sequence of morphisms $\Delta=(\psi_{n})_{n=1}^{\infty}$, where $\psi_n \in \{\varphi_{{\tt 0}},\varphi_{{\tt 1}},\dots, \varphi_{{\tt d-1}}\}$, and a unique sequence of standard episturmian sequences $(\uu^{(n)})_{n=1}^{\infty}$ such that 
$$\uu=\psi_{1}\psi_{2}\psi_{3}\cdots \psi_{n}(\uu^{(n)}), \text{ for every } n\geq 1.$$
\end{theorem}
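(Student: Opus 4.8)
The plan is to isolate a single \emph{desubstitution step} and iterate it. Write $a:=u_0$ for the first letter of $\uu$. The heart of the matter is the claim that a standard episturmian sequence $\uu$ with first letter $a$ can be written \emph{uniquely} as $\uu=\varphi_a(\vvv)$, and that $\vvv$ is again standard episturmian. Granting this, one puts $\uu^{(0)}:=\uu$, lets $a_n$ be the first letter of $\uu^{(n-1)}$, sets $\psi_n:=\varphi_{a_n}$ and lets $\uu^{(n)}$ be the desubstitution of $\uu^{(n-1)}$; by the claim each $\uu^{(n)}$ is standard episturmian and $\uu^{(n-1)}=\psi_n(\uu^{(n)})$, so $\uu=\psi_1\psi_2\cdots\psi_n(\uu^{(n)})$ for every $n\geq 1$, which is the asserted representation. (An alternative route, which is the one in \cite{DrJuPi2001}, builds the directive word via iterated palindromic closure; the self-contained argument below uses only the definitions recalled above.)

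For the factorisation $\uu=\varphi_a(\vvv)$: since left special factors of $\uu$ are prefixes of $\uu$ and there is at most one of each length, the only possible left special factor of length $1$ is the prefix $a$; hence every letter $b\neq a$ occurring in $\uu$ has a unique possible left neighbour $\rho(b)$. I claim $\uu$ has no factor $a\beta_1\beta_2\cdots\beta_q$ with $q\geq 2$ and all $\beta_i\in\mathcal{A}\setminus\{a\}$: if it had one, then, $\mathcal{L}(\uu)$ being closed under reversal, $\beta_q\beta_{q-1}\cdots\beta_1 a$ would also be a factor; but the left neighbour of $\beta_1$ is $a$ in the first factor and $\beta_2$ in the second, so $a=\rho(\beta_1)=\beta_2$, contradicting $\beta_2\neq a$. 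Consequently no two non-$a$ letters are adjacent in $\uu$, and walking leftwards from an arbitrary occurrence of a non-$a$ letter until the first letter $a$ of $\uu$ is reached (which must happen since $u_0=a$) shows $\rho(b)=a$ for every $b\neq a$. Therefore $\uu$ admits a unique factorisation into blocks from $\{a\}\cup\{ab:b\in\mathcal{A}\setminus\{a\}\}$ --- scanning from the left, every $a$ opens a new block and every non-$a$ letter is the second symbol of its block --- and decoding blocks by $a\mapsto a$, $ab\mapsto b$ produces the unique $\vvv$ with $\varphi_a(\vvv)=\uu$.

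That $\vvv$ is standard episturmian rests on the recognisability of $\varphi_a$: it is injective, every image begins with $a$, and in $\uu=\varphi_a(\vvv)$ the occurrences of $a$ are exactly the block starts while non-$a$ letters never start a block, so any occurrence in $\uu$ of $\varphi_a(w)$ with $|w|\geq 2$ is automatically block-aligned. This yields a bijection between the factors of $\vvv$ and the block-aligned factors of $\uu$ beginning with $a$, through which one checks: (i) uniform recurrence of $\uu$ (episturmian sequences being uniformly recurrent) passes to $\vvv$; (ii) $\mathcal{L}(\vvv)$ is closed under reversal --- for $w\in\mathcal{L}(\vvv)$ (the case $|w|\leq 1$ being trivial) extend $w$ to a factor $w'$ of $\vvv$ beginning and ending with non-$a$ letters (possible unless $\uu=a^\omega$, a trivial case); then $\varphi_a(w')^R\in\mathcal{L}(\uu)$ begins with a non-$a$ letter, so $a\,\varphi_a(w')^R\in\mathcal{L}(\uu)$, and the identity $\varphi_a(x^R)\,a=a\,\varphi_a(x)^R$ makes $\varphi_a\big((w')^R\big)$ a prefix of $a\,\varphi_a(w')^R$, hence a factor of $\uu$, which by recognisability forces $(w')^R\in\mathcal{L}(\vvv)$, whence $w^R\in\mathcal{L}(\vvv)$; (iii) $\vvv$ has at most one right special --- equivalently, by (ii), at most one left special --- factor of each length, all of them prefixes of $\vvv$, since a factor $w$ of $\vvv$ is left (right) special exactly when $\varphi_a(w)$ is (the two distinct one-sided neighbours of $\varphi_a(w)$ arising as the distinct values of the relevant outermost letter of $\varphi_a(\cdot)$ at the two distinct neighbours of $w$) and the left special factors of $\uu$ are prefixes with exactly one of each length (this last point using aperiodicity of $\uu$, the periodic cases being disposed of directly). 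Uniqueness is immediate: any factorisation $\uu=\varphi_i(\vvv')$ forces $i=a$ because every $\varphi_i$-image begins with $i$, and then injectivity of $\varphi_a$ forces $\vvv'=\vvv$; iterating this gives the uniqueness of $(\psi_n)$ and of $(\uu^{(n)})$.

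The genuinely delicate point, and the one I expect to be the main obstacle, is the verification in (iii) that the block-decoding $\vvv$ inherits \emph{all} the defining properties of a standard episturmian sequence. Because $\varphi_a$ is not length-preserving, a special factor of $\vvv$ cannot be matched with a special factor of $\uu$ of the same length; one must track how $\varphi_a$ distorts lengths and one-sided extensions and exploit the explicit form of the prefixes of $\uu$, in particular to rule out that $\vvv$ could acquire a spurious second left special factor of some length (the subtle cases are short factors and factors ending in the letter $a$). The factorisation step, the iteration, and the uniqueness are, by comparison, routine.
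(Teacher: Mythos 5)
First, a point of comparison: the paper does not prove this statement at all --- it is quoted from Droubay--Justin--Pirillo, whose own derivation goes through iterated palindromic closure; your desubstitution-and-iterate strategy is the alternative route (the one of Justin--Pirillo), and it is viable. Your first two steps are sound: the argument that every letter $b\neq a$ has the unique left neighbour $a$ (via closure under reversal and the fact that left special factors are prefixes), hence that $\uu$ factors uniquely into blocks $a$ and $ab$, giving a unique $\vvv$ with $\uu=\varphi_a(\vvv)$, is correct, as is (modulo the routine block-alignment bookkeeping) the closure of $\mathcal{L}(\vvv)$ under reversal. The genuine gap is exactly the point you flag yourself and leave unresolved: item (iii). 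The asserted equivalence ``$w$ is left (right) special in $\vvv$ exactly when $\varphi_a(w)$ is, the distinct neighbours arising as outermost letters of the $\varphi_a$-images'' is false on the right-hand side: every image $\varphi_a(x)$ \emph{begins} with $a$, so at a block-aligned occurrence all right extensions of $\varphi_a(w)$ start with the same letter $a$; indeed, if $w$ ends in a letter $\neq a$, then $\varphi_a(w)$ is never right special in $\uu$. The correct correspondence pairs $w$ with $\varphi_a(w)\,a$ (this is precisely the shape appearing in Lemma~\ref{lem:bispecials_d} of the paper). Moreover, even with a correct specialness correspondence, ``at most one right special factor of each length'' does not transfer by counting, because $\varphi_a$ stretches factors by an amount depending on how many non-$a$ letters they contain: two distinct equal-length special factors of $\vvv$ would map to special factors of $\uu$ of possibly \emph{different} lengths, so no contradiction with the episturmian property of $\uu$ arises directly. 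Since you explicitly defer this verification (``the main obstacle''), the proof as written does not establish that $\vvv$ is standard episturmian, which is the heart of the theorem.

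The gap can be closed along your intended lines, and more cheaply than by matching special factors length-by-length: show that if $w$ is left special in $\vvv$, with distinct left extensions $x,y$, then $\varphi_a(w)\,a$ is left special in $\uu$ --- at block-aligned occurrences the letter preceding $\varphi_a(w)$ is the last letter of $\varphi_a(x)$, which equals $x$, while the letter following $\varphi_a(w)$ is always $a$ (the start of the next block). Standardness of $\uu$ then makes $\varphi_a(w)\,a$ a prefix of $\uu$, and the appended $a$ guarantees that the block decomposition of this prefix consists of complete blocks decoding exactly to $w$; hence $w$ is a prefix of $\vvv$. Thus all left special factors of $\vvv$ are prefixes of $\vvv$, so there is at most one of each length, and by the already-established reversal closure of $\mathcal{L}(\vvv)$ there is at most one right special factor of each length as well. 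This settles (iii) without any length bookkeeping, without the aperiodicity case split you invoke, and in particular handles the ``factor ending in $a$'' cases that your sketch leaves open; the iteration and uniqueness parts of your argument then go through as you describe.
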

The sequence of morphisms $\Delta$ is called the {\em directive sequence} of $\uu$. 
We associate the same directive sequence to each episturmian sequence having the same language.
On one hand, it is well-known that a $d$-ary episturmian sequence is AR if and only if every morphism from the set $\{\varphi_{{\tt 0}},\varphi_{{\tt 1}},\dots, \varphi_{{\tt d-1}}\}$ occurs in the directive sequence infinitely many times.
On the other hand, if an episturmian sequence is eventually periodic, then it is periodic and its directive sequence is eventually constant. It follows immediately that for each aperiodic $d$-ary standard episturmian sequence $\uu$ that is not AR, there exist $\psi_1, \dots, \psi_n \in \{\varphi_{{\tt 0}},\varphi_{{\tt 1}},\dots, \varphi_{{\tt d-1}}\}$ such that $\uu=\psi_1\cdots \psi_n(\vvv)$ for some $d'$-ary AR sequence, where $2\leq d' <d$.

If there exists a sequence of positive integers $(a_n)_{n \in \N}$ such that the directive sequence $\Delta$ has the form $$\Delta = \varphi^{a_0}_{\tt 0} \varphi^{a_1}_{\tt 1} \cdots \varphi_{\tt d-1}^{a_{d-1}}\varphi^{a_d}_{\tt 0} \varphi^{a_{d+1}}_{\tt 1} \cdots \varphi_{\tt d-1}^{a_{2d-1} }\varphi_{\tt 0}^{a_{2d}} \cdots\,,  $$
then $\uu$ is called a \textit{regular} AR sequence (the name was introduced by Peltom\"{a}ki~\cite{Pel21}).

\begin{example}\label{ex:d-bonacci}
    The most prominent  $d$-ary AR sequence is the $d$-bonacci sequence $\uu_d$ having the directive sequence $\Delta=(\varphi_{\tt 0}\varphi_{\tt 1}\cdots \varphi_{\tt d-1})^{\omega}$. In particular, $\uu_2$ is the {\em Fibonacci sequence} and $\uu_3$ the {\em Tribonacci sequence}.  Obviously, $\uu_d$ is a regular AR sequence. 
\end{example}

\section{Critical exponent of AR sequences}
In this section, we will derive a matrix formula for the critical exponent and the asymptotic critical exponent of AR sequences.
For regular AR sequences, a   formula based on the $S$-adic representation is given  in~\cite{DL23}. 
Previously, Justin and Pirillo~\cite{JuPi2002} provided a formula for the asymptotic critical exponent of AR sequences fixed by a~morphism. Integer powers in regular AR sequences were studied by Glen~\cite{Glen07}.

\begin{lemma}[\cite{DrJuPi2001, Glen07}]\label{lem:bispecials_d}
Let $\tt i\in\{\tt{0}, \tt{1}, \dots, \tt{d-1}\}$ and let 
$\uu, \vvv$ be standard $d$-ary episturmian sequences. Let
$r \in {\mathcal L}(\vvv)$.
If $\uu=\varphi_{\tt i}(\vvv)$, then
\begin{enumerate}
    \item if $b$ is a bispecial factor in $\vvv$, then  $\varphi_{\tt i}(b){\tt i}$ is a bispecial factor in $\uu$;
    \item if $w$ is a non-empty bispecial factor in $\uu$, then $w=\varphi_{\tt i}(b){\tt i}$ for some bispecial factor $b$ in $\vvv$;
    \item $r\in\mathcal{R}_{\vvv}(b)$  if and only if $\varphi_{\tt i}(r)\in\mathcal{R}_\uu(\varphi_{\tt i}(b){\tt i})$.   
\end{enumerate}
\end{lemma}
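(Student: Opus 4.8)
The plan is to reduce all three items to a single combinatorial observation about occurrences, exploiting the elementary block structure of the image $\uu=\varphi_{\tt i}(\vvv)$. Write $\uu$ as the concatenation of the \emph{blocks} $B_k:=\varphi_{\tt i}(v_k)$, $k\in\N$, where $v_k$ is the $k$-th letter of $\vvv$; by definition of $\varphi_{\tt i}$ each block is either the single letter $\tt i$ (if $v_k=\tt i$) or the word $\tt i v_k$ (if $v_k\neq\tt i$). The first step is the structural fact that a position of $\uu$ carries the letter $\tt i$ if and only if it is a block boundary $p_k:=|\varphi_{\tt i}(v_0\cdots v_{k-1})|$: every block begins with $\tt i$, and every letter different from $\tt i$ occurs only as the second letter of some block $\tt i v_k$. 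In particular $\uu$ begins with $\tt i$, and $\varphi_{\tt i}$ is injective (the block decomposition of a word in the image is forced).

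From this I would prove the key claim: \emph{for every $b\in\mathcal{L}(\vvv)$ (possibly empty), the map sending the occurrence of $b$ at position $k$ in $\vvv$ to the occurrence of $\varphi_{\tt i}(b)\tt i$ at position $p_k$ in $\uu$ is an order-preserving bijection onto the set of all occurrences of $\varphi_{\tt i}(b)\tt i$ in $\uu$.} The easy inclusion: if $b=v_k\cdots v_{k+|b|-1}$, then $\varphi_{\tt i}(b)=B_k\cdots B_{k+|b|-1}$ occurs at $p_k$ and is immediately followed by $B_{k+|b|}$, whose first letter is $\tt i$. For the reverse inclusion, an occurrence of $\varphi_{\tt i}(b)\tt i$ starts with $\tt i$, hence at some boundary $p_k$, and ends with $\tt i$, hence the position just past $\varphi_{\tt i}(b)$ is again a boundary $p_m$; thus $\varphi_{\tt i}(b)=B_k\cdots B_{m-1}=\varphi_{\tt i}(v_k\cdots v_{m-1})$, and injectivity yields $b=v_k\cdots v_{m-1}$. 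It is precisely the appended letter $\tt i$ that forces synchronisation at the right end; for $\varphi_{\tt i}(b)$ alone the claim is false (e.g. $\varphi_{\tt i}(\tt i)=\tt i$ may occur inside a block $\tt i\tt j$). This synchronisation step is the only place where real care is needed.

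Items (1) and (2) then follow by transporting left and right extensions across this bijection. Since the last letter of any block $B_j$ equals $v_j$, the letter of $\uu$ preceding an occurrence of $\varphi_{\tt i}(b)\tt i$ at $p_k$ equals the letter of $\vvv$ preceding the matching occurrence of $b$; and since, just after the appended $\tt i$ (the first letter of $B_{k+|b|}$), the next letter of $\uu$ is again $v_{k+|b|}$, the letter of $\uu$ following $\varphi_{\tt i}(b)\tt i$ equals the letter of $\vvv$ following $b$. Hence $\varphi_{\tt i}(b)\tt i$ and $b$ have identical sets of left and of right extensions, so one is bispecial exactly when the other is; this proves (1). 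For (2) I would first observe that a nonempty left (resp. right) special factor $w$ of $\uu$ must begin (resp. end) with $\tt i$: otherwise its first (resp. last) letter would be some $\tt l\neq\tt i$, which occurs in $\uu$ only as the second letter of a block $\tt i\tt l$ and is therefore always preceded by $\tt i$ (resp. always followed by the $\tt i$ opening the next block), giving $w$ a unique extension on that side. So a nonempty bispecial $w$ begins and ends with $\tt i$, and decomposing an occurrence of $w$ into blocks writes $w=\varphi_{\tt i}(b)\tt i$ with $b\in\mathcal{L}(\vvv)$; by the bijection $b$ is bispecial. The empty word is dispatched directly: $\varepsilon$ is bispecial in $\vvv$ iff $\vvv$ uses at least two letters, in which case $\varphi_{\tt i}(\varepsilon)\tt i=\tt i$ is bispecial in $\uu$ by the same letter count.

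Finally, (3) is immediate from the bijection being order-preserving. Consecutive occurrences of $b$ at positions $j<\ell$ in $\vvv$ correspond to consecutive occurrences of $\varphi_{\tt i}(b)\tt i$ at $p_j<p_\ell$ in $\uu$, the gap between them being $B_j\cdots B_{\ell-1}=\varphi_{\tt i}(v_j\cdots v_{\ell-1})=\varphi_{\tt i}(r)$; conversely every return word to $\varphi_{\tt i}(b)\tt i$ arises this way, and injectivity of $\varphi_{\tt i}$ promotes the equality $\mathcal{R}_\uu(\varphi_{\tt i}(b)\tt i)=\varphi_{\tt i}(\mathcal{R}_\vvv(b))$ to the stated biconditional. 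In summary, the whole lemma is driven by the block structure of $\varphi_{\tt i}$; the main (mild) obstacle is the synchronisation argument in the key claim, together with the routine bookkeeping for the empty factor and for blocks of length one.
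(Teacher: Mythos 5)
Your argument is correct. Note that the paper itself gives no proof of this lemma --- it is quoted with the citations \cite{DrJuPi2001, Glen07} --- so there is no internal proof to compare against; what you supply is a self-contained synchronization argument in the spirit of those sources. The block decomposition of $\uu=\varphi_{\tt i}(\vvv)$ into $B_k\in\{{\tt i},{\tt i}v_k\}$, the observation that ${\tt i}$ occurs exactly at block boundaries, and the resulting order-preserving bijection between occurrences of $b$ in $\vvv$ and of $\varphi_{\tt i}(b){\tt i}$ in $\uu$ are exactly the right ingredients: the appended ${\tt i}$ forces right synchronization (as you correctly isolate as the one delicate point), the last letter of $B_{k-1}$ being $v_{k-1}$ and the letter after the appended ${\tt i}$ being $v_{k+|b|}$ (in both cases $v_{k+|b|}={\tt i}$ and $v_{k+|b|}\neq{\tt i}$) give equality of left and right extension sets, which yields (1) and, together with the fact that nonempty left (right) special factors of $\uu$ must begin (end) with ${\tt i}$, also (2); and (3) follows from the bijection being order-preserving plus injectivity of $\varphi_{\tt i}$, giving $\mathcal{R}_\uu(\varphi_{\tt i}(b){\tt i})=\varphi_{\tt i}(\mathcal{R}_\vvv(b))$. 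Your handling of the boundary cases (the empty bispecial factor and single-letter blocks) is also sound, so I see no gap.
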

\begin{corollary}\label{coro:r_N_and_b_N} Let $\uu$ be an AR sequence.
 Denote $b_N$ the $N$-th bispecial factor of $\uu$ (when ordered by length) and $r_N$ the shortest return word to $b_N$. Then $b_N=r_N b_{N-1}$ for $N \geq 1$.
 \end{corollary}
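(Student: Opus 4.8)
The plan is to argue by induction on $N$, using the $S$-adic description of $\uu$ together with Lemma~\ref{lem:bispecials_d}. Since bispecial factors and sets of return words depend only on the language, I may assume $\uu$ is standard. Write $\uu=\varphi_{\tt i}(\vvv)$, where $\psi_1=\varphi_{\tt i}$ is the first term of the directive sequence of $\uu$ and $\vvv=\uu^{(1)}$ is the associated standard episturmian sequence; deleting the first term of the directive sequence preserves the property that every $\varphi_{\tt j}$ occurs infinitely often, so $\vvv$ is again AR over the same alphabet. Let $(\beta_n)_{n\in\N}$ be the bispecial factors of $\vvv$ ordered by length, so $\beta_0=\varepsilon$.

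The first step is the bispecial correspondence. By Lemma~\ref{lem:bispecials_d}(1)--(2), the map $w\mapsto\varphi_{\tt i}(w){\tt i}$ is a bijection from the bispecial factors of $\vvv$ onto the nonempty bispecial factors of $\uu$. The bispecial factors of a standard episturmian sequence, being left special, are prefixes of it and hence form a chain for the prefix order; since $\varphi_{\tt i}$ is non-erasing, $w\mapsto\varphi_{\tt i}(w){\tt i}$ is strictly length-increasing and so matches the two orderings by length. As $b_0=\varepsilon$, this gives $b_N=\varphi_{\tt i}(\beta_{N-1}){\tt i}$ for every $N\ge 1$, and in particular $b_{N-1}=\varphi_{\tt i}(\beta_{N-2}){\tt i}$ when $N\ge 2$.

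Now the induction. For $N=1$ we have $b_1=\varphi_{\tt i}(\varepsilon){\tt i}={\tt i}$ and $b_0=\varepsilon$. As $\vvv$ is AR, the letter ${\tt i}$ occurs in $\vvv$, hence ${\tt i}\in\R_\vvv(\varepsilon)$, and Lemma~\ref{lem:bispecials_d}(3) yields ${\tt i}=\varphi_{\tt i}({\tt i})\in\R_\uu({\tt i})$; therefore ${\tt i}{\tt i}\in\mathcal L(\uu)$, so $r_1={\tt i}$ and $b_1={\tt i}=r_1 b_0$. For $N\ge 2$, assume the statement for $\vvv$ at index $N-1$, i.e. $\beta_{N-1}=\rho_{N-1}\beta_{N-2}$ where $\rho_{N-1}$ is the shortest return word to $\beta_{N-1}$ in $\vvv$. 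Applying $\varphi_{\tt i}$, appending ${\tt i}$, and using $b_{N-1}=\varphi_{\tt i}(\beta_{N-2}){\tt i}$,
$$
b_N=\varphi_{\tt i}(\beta_{N-1}){\tt i}=\varphi_{\tt i}(\rho_{N-1})\,\varphi_{\tt i}(\beta_{N-2}){\tt i}=\varphi_{\tt i}(\rho_{N-1})\,b_{N-1}.
$$
By Lemma~\ref{lem:bispecials_d}(3), $\varphi_{\tt i}(\rho_{N-1})\in\R_\uu(b_N)$, whence $|r_N|\le|b_N|-|b_{N-1}|$.

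It remains to prove $|r_N|\ge|b_N|-|b_{N-1}|$, which is the crux, and for this I would argue with borders. Let $j<\ell$ be two consecutive occurrences of $b_N$ in $\uu$ with $|r_N|=\ell-j$. If $\ell-j\ge|b_N|$, then $|r_N|\ge|b_N|>|b_N|-|b_{N-1}|$ (as $|b_{N-1}|\ge 1$). Otherwise the occurrences overlap and the suffix of $b_N$ of length $|b_N|-(\ell-j)$ equals the prefix of $b_N$ of the same length; denote this common nonempty proper border by $q$. Being a prefix of the left special factor $b_N$, $q$ is left special, and being a suffix of the right special factor $b_N$, $q$ is right special; hence $q$ is bispecial, and as it is shorter than $b_N$ it is one of $b_0,\dots,b_{N-1}$, so $|q|\le|b_{N-1}|$ and $\ell-j\ge|b_N|-|b_{N-1}|$. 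Therefore $|r_N|=|b_N|-|b_{N-1}|=|\varphi_{\tt i}(\rho_{N-1})|$. Finally, $r_N$ is a return word of length less than $|b_N|$ beginning at an occurrence of $b_N$, hence it is the prefix of $b_N$ of length $|b_N|-|b_{N-1}|$; by the displayed identity $\varphi_{\tt i}(\rho_{N-1})$ is that same prefix; so $r_N=\varphi_{\tt i}(\rho_{N-1})$ and $b_N=r_N b_{N-1}$, completing the induction. The main obstacle is exactly this last step --- upgrading ``$\varphi_{\tt i}(\rho_{N-1})$ is \emph{a} return word of the right length'' to ``it is \emph{the shortest} return word'': a direct comparison of the image lengths $|\varphi_{\tt i}(r)|=2|r|-|r|_{\tt i}$ over all $r\in\R_\vvv(\beta_{N-1})$ is unpleasant, whereas the border argument, which rests on the uniqueness of left and right special factors of each length in an episturmian language, resolves it at once.
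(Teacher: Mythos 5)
Your proposal is correct, and its skeleton --- induction on $N$ with the statement quantified over all AR sequences, passing from $\uu=\varphi_{\tt i}(\vvv)$ to $\vvv$ via Lemma~\ref{lem:bispecials_d} --- is the same as the paper's. Where you genuinely diverge is in the justification of the key identification $r_N=\varphi_{\tt i}(\rho_{N-1})$. The paper's proof simply writes the image of the shortest return word in $\vvv$ as $r_{N+1}$, i.e.\ it treats it as immediate that the bijection of Lemma~\ref{lem:bispecials_d}(3) between $\R_\vvv(\beta_{N-1})$ and $\R_\uu(b_N)$ sends shortest to shortest; as you observe, this is not automatic, since $|\varphi_{\tt i}(r)|=2|r|-|r|_{\tt i}$ distorts lengths non-uniformly, and the paper in effect only supplies the missing reason later, in the proof of Lemma~\ref{lem:BS_shortest_retwords}, where return words to $b_N$ are recognized as images $\psi_1\cdots\psi_N({\tt j})$ of letters and $\varphi_{\tt i}({\tt i})$ is a proper prefix of each $\varphi_{\tt i}({\tt j})$. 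You instead prove the minimality bound $|r_N|\ge|b_N|-|b_{N-1}|$ intrinsically: two overlapping occurrences of $b_N$ produce a proper border, which is a prefix of a left special factor and a suffix of a right special factor, hence bispecial, hence of length at most $|b_{N-1}|$; combined with the upper bound coming from $\varphi_{\tt i}(\rho_{N-1})\in\R_\uu(b_N)$ and the fact that a return word shorter than $b_N$ is a prefix of $b_N$, this pins down $r_N$. This buys a self-contained argument that does not presuppose the letter-image structure of return words, and your explicit check that the bispecial correspondence preserves the ordering by length (via the prefix chain in the standard sequence and the length-increasing map $w\mapsto\varphi_{\tt i}(w){\tt i}$) fills another step the paper leaves implicit; the paper's version is shorter but defers these points. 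The proof is complete as written; only make sure the induction hypothesis is stated, as in the paper, for all AR sequences at index $N-1$, since you apply it to $\vvv$ rather than to $\uu$ --- your argument already does this in substance.
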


 \begin{proof}
Let us prove the statement by induction.
Let $\varphi_{\tt i}$ be the first element of the directive sequence of $\uu$. Then $\uu=\varphi_{\tt i}(\vvv)$, where $\vvv$ is an AR sequence. 
Evidently, the empty word $\varepsilon$ is a bispecial factor in $\vvv$ and $\{\tt 0, 1, \dots, d-1\}$ is the set of its return words. By Lemma~\ref{lem:bispecials_d}, $b_1=\varphi_{\tt i}(\varepsilon)\tt i=\tt i$ and $r_1=\varphi_{\tt i}(\tt i)=\tt i$, thus $b_1=r_1 b_0={\tt i}\varepsilon$. 
 
Assume $b_N=r_N b_{N-1}$ holds for some $N \geq 1$ not only in the sequence $\uu$, but in any AR sequence. Then by Lemma~\ref{lem:bispecials_d} and using the induction assumption, $b_{N+1}=\varphi_{\tt i}(b_N^{\vvv}){\tt i}=\varphi_{\tt i}(r_N^{\vvv}b_{N-1}^{\vvv}){\tt i}=\varphi_{\tt i}(r_N^{\vvv})\varphi_{\tt i}(b_{N-1}^{\vvv}){\tt i}=r_{N+1}b_N$,
where $b_N^{\vvv}$, resp. $r_N^{\vvv}$ denotes the $N$-th bispecial factor in $\vvv$, resp. its shortest return word.

 \end{proof}

\begin{lemma}\label{lem:BS_shortest_retwords}
Let $\uu$ be an AR sequence with the directive sequence $\Delta=(\psi_{n})_{n=1}^{\infty}$. 
Denote $b_N$ the $N$-th bispecial factor of $\uu$ (when ordered by length) and $r_N$ the shortest return word to $b_N$. Then for $N\geq 1$
$$|b_N|=\frac{1}{d-1}{\vec 1\hspace{0.01cm} }^T (M_{\psi_1} M_{\psi_2}\cdots M_{\psi_N}-I)\vec 1$$
and
$$|r_N|=\frac{1}{d-1}{\vec 1\hspace{0.01cm} }^{T} M_{\psi_1} M_{\psi_2}\cdots (M_{\psi_N}-I) \vec 1.$$
\end{lemma}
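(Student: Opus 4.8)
The plan is to prove both formulas simultaneously by induction on $N$, using the recursive structure provided by Lemma~\ref{lem:bispecials_d} and the multiplicativity of incidence matrices. The key observation is that Corollary~\ref{coro:r_N_and_b_N} already gives $b_N = r_N b_{N-1}$, so $|b_N| = |r_N| + |b_{N-1}|$; hence the two claimed formulas are consistent with each other, since summing the proposed expression for $|r_n|$ over $n=1,\dots,N$ telescopes (the $M_{\psi_1}\cdots M_{\psi_{n-1}}(M_{\psi_n}-I)\vec 1$ terms collapse to $(M_{\psi_1}\cdots M_{\psi_N}-I)\vec 1$). So it would suffice to establish the formula for $|r_N|$ alone, or alternatively to run the induction on the pair.

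First I would check the base case $N=1$. As in the proof of Corollary~\ref{coro:r_N_and_b_N}, if $\psi_1 = \varphi_{\tt i}$ then $b_1 = {\tt i}$ and $r_1 = {\tt i}$, so $|b_1| = |r_1| = 1$; I must verify that $\tfrac{1}{d-1}\vec 1^{\,T}(M_{\varphi_{\tt i}} - I)\vec 1 = 1$. From \eqref{eq:elementary_morphisms}, $M_{\varphi_{\tt i}} - I$ has a single row (row $i$) equal to $\vec 1^{\,T}$ outside the diagonal, so $\vec 1^{\,T}(M_{\varphi_{\tt i}}-I)\vec 1$ counts the off-diagonal entries of row $i$, which is $d-1$; dividing by $d-1$ gives $1$. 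Good.

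For the inductive step, suppose $\uu = \varphi_{\tt i}(\vvv)$ where $\vvv$ is AR with directive sequence $(\psi_{n+1})_{n\ge 1}$, and denote by $b_N^{\vvv}, r_N^{\vvv}$ the corresponding objects for $\vvv$. Part 1 and 3 of Lemma~\ref{lem:bispecials_d} give $b_{N+1} = \varphi_{\tt i}(b_N^{\vvv}){\tt i}$ and, taking $N$-th shortest return words, $r_{N+1} = \varphi_{\tt i}(r_N^{\vvv})$; actually it is cleaner to use $b_N = \varphi_{\tt i}(b_{N-1}^{\vvv})\,{\tt i}$ for $N\ge 1$ (with $b_0^{\vvv}=\varepsilon$) and $r_N = \varphi_{\tt i}(r_{N-1}^{\vvv})$ for $N\ge 2$, together with the $N=1$ case handled above. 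Then $|r_N| = \vec 1^{\,T}\vv{\varphi_{\tt i}(r_{N-1}^{\vvv})} = \vec 1^{\,T} M_{\varphi_{\tt i}} \vec{r_{N-1}^{\vvv}}$. I would then need the Parikh vector $\vec{r_{N-1}^{\vvv}}$ rather than just its length, so the induction hypothesis must be strengthened to a vector identity: namely $\vec{r_N} = M_{\psi_1}\cdots M_{\psi_{N-1}}(M_{\psi_N}-I)\vec 1 / (d-1)$ and $\vec{b_N} = (M_{\psi_1}\cdots M_{\psi_N} - I)\vec 1/(d-1)$. Applying $M_{\varphi_{\tt i}} = M_{\psi_1}$ on the left and invoking the hypothesis for $\vvv$ (whose morphism matrices are $M_{\psi_2}, M_{\psi_3},\dots$) then yields the claim for $\uu$ after left-multiplying by $\vec 1^{\,T}$.

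The one genuine subtlety — and the step I expect to be the main obstacle — is justifying the vector-level induction hypothesis, i.e.\ that $\vec{b_N}$ and $\vec{r_N}$ themselves (not just their lengths) satisfy $\vec{b_N} = M_{\varphi_{\tt i}}\vec{b_{N-1}^{\vvv}} + \vec e_i$ and $\vec{r_N} = M_{\varphi_{\tt i}}\vec{r_{N-1}^{\vvv}}$, which requires knowing that $\vv{\varphi_{\tt i}(w)\,{\tt i}} = M_{\varphi_{\tt i}}\vec w + \vec e_i$ and then checking that $(M_{\psi_1}-I)\vec 1$-type corrections propagate correctly through the product; concretely one must verify that $M_{\varphi_{\tt i}}\vec 1 - \vec 1 = (d-1)\vec e_i$ so that the ``$-I$'' and ``$+\vec e_i$'' bookkeeping matches up telescopically. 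Once that linear-algebra identity is in place, everything reduces to the routine matrix manipulations sketched above, and left-multiplying the final vector identities by $\vec 1^{\,T}$ gives the two scalar formulas in the statement.
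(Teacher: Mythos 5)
Your argument is correct in substance and uses the same ingredients as the paper (Lemma~\ref{lem:bispecials_d}, Corollary~\ref{coro:r_N_and_b_N}, and the identity $(M_{\varphi_{\tt i}}-I)\vec 1=(d-1)\vec e_i$), but it is organized differently. The paper proves the $|r_N|$ formula \emph{without} induction: by Lemma~\ref{lem:bispecials_d} every return word to $b_N$ is $\psi_1\cdots\psi_N({\tt j})$, and since $\varphi_{\tt i}({\tt i})={\tt i}$ is a proper prefix of $\varphi_{\tt i}({\tt j})={\tt i}{\tt j}$ when $\psi_N=\varphi_{\tt i}$, the shortest one is exactly $r_N=\psi_1\cdots\psi_N({\tt i})$; the length formula then falls out of $M_{\psi_N}\vec e_i=\vec e_i$ and $(M_{\psi_N}-I)\vec 1=(d-1)\vec e_i$. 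Only the $|b_N|$ formula is proved by induction, via $|b_{N+1}|=|r_{N+1}|+|b_N|$ -- your telescoping observation run in the opposite direction. You instead induct through the desubstitution $\uu=\varphi_{\tt i}(\vvv)$ with a Parikh-vector-strengthened hypothesis; that strengthening is sound (both vector identities are true), and the bookkeeping identity $M_{\varphi_{\tt i}}\vec 1-\vec 1=(d-1)\vec e_i$ that you single out as the crux does hold and does make the ``$-I$'' and ``$+\vec e_i$'' terms cancel correctly, so your inductive step goes through. Your base case computation is also correct.

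The one step you should not attribute to parts 1 and 3 of Lemma~\ref{lem:bispecials_d} alone is $r_N=\varphi_{\tt i}(r_{N-1}^{\vvv})$: part 3 gives a bijection between the two sets of return words, but not that the shortest return word maps to the shortest one, and $|\varphi_{\tt i}(w)|=2|w|-|w|_{\tt i}$ is not monotone in $|w|$ in general. This is precisely what the paper's explicit identification of $r_N$ buys. Within your framework the point is easy to close: for $N\geq 2$, Corollary~\ref{coro:r_N_and_b_N} applied to $\vvv$ together with part 1 of Lemma~\ref{lem:bispecials_d} gives $r_Nb_{N-1}=b_N=\varphi_{\tt i}\bigl(r_{N-1}^{\vvv}b_{N-2}^{\vvv}\bigr){\tt i}=\varphi_{\tt i}\bigl(r_{N-1}^{\vvv}\bigr)b_{N-1}$, hence $r_N=\varphi_{\tt i}\bigl(r_{N-1}^{\vvv}\bigr)$; alternatively, run your vector induction for the Parikh vector of $b_N$ only (which needs no return words) and then obtain $|r_N|=|b_N|-|b_{N-1}|$ from Corollary~\ref{coro:r_N_and_b_N}, which telescopes to the claimed formula for $|r_N|$. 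With that small repair your proof is complete.
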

\begin{proof} 
First, let us deal with the lengths of return words, second, with the lengths of bispecial factors. 

Using Lemma~\ref{lem:bispecials_d}, each return word to $b_N$ is of the form $\psi_1\cdots \psi_N(\tt j)$, where $\tt j \in \{\tt 0, 1, \dots, d-1\}$.
If $\psi_N=\varphi_{\tt i}$, then $\varphi_{\tt i}(\tt i)=\tt i$ is a proper prefix of $\varphi_{\tt i}(\tt j)={\tt i}\tt j$ for each $\tt j \not =\tt i$. Consequently, the shortest return word is of the form
$r_N=\psi_1\cdots \psi_N(\tt i)$.
The length of $r_N$ satisfies
$$\begin{array}{rcl}
|r_N|&=&{\vec 1\hspace{0.01cm} }^T M_{\psi_1}\cdots M_{\psi_N}\vec e_{i}\\
&=&{\vec 1\hspace{0.01cm} }^T M_{\psi_1}\cdots M_{\psi_{N-1}}\vec e_{i}\\
&=&\frac{1}{d-1}{\vec 1\hspace{0.01cm} }^T M_{\psi_1}\cdots  M_{\psi_{N-1}}(M_{\psi_N}-I)\vec 1\,.
\end{array}$$

Now, we will prove by induction that $|b_N|=\frac{1}{d-1}{\vec 1\hspace{0.01cm} }^T (M_{\psi_1} M_{\psi_2}\cdots M_{\psi_N}-I)\vec 1$.
Let $\psi_1=\varphi_{\tt i}$, then by Lemma~\ref{lem:bispecials_d} we have $b_1={\tt i}$.
Thus, $|b_1|=\frac{1}{d-1}{\vec 1\hspace{0.01cm} }^T (M_{\psi_1}-I)\vec 1=\frac{1}{d-1}{\vec 1\hspace{0.01cm} }^T (d-1)\vec e_{i}=1$.
Assume $|b_N|=\frac{1}{d-1}{\vec 1\hspace{0.01cm} }^T (M_{\psi_1} M_{\psi_2}\cdots M_{\psi_N}-I)\vec 1$ holds for some $N \geq 1$. Then using Corollary~\ref{coro:r_N_and_b_N} and the already proved first statement, we have 
$$\begin{array}{rcl}
|b_{N+1}|&=&|r_{N+1}|+|b_{N}|\\
&=&\frac{1}{d-1}{\vec 1\hspace{0.01cm} }^{T} M_{\psi_1} \cdots M_{\psi_N}(M_{\psi_{N+1}}-I) \vec 1+\frac{1}{d-1}{\vec 1\hspace{0.01cm} }^T (M_{\psi_1} \cdots M_{\psi_N}-I)\vec 1\\
&=&\frac{1}{d-1}{\vec 1\hspace{0.01cm} }^{T} (M_{\psi_1} \cdots M_{\psi_N}M_{\psi_{N+1}}-I)\vec 1\,.
\end{array}$$
\end{proof}

Using Theorem~\ref{thm:FormulaForE} and Lemma~\ref{lem:BS_shortest_retwords}, we provide a matrix formula for the  critical and the asymptotic critical exponent. 
\begin{theorem}\label{thm:formulaE*andE}
Let $\uu$ be a $d$-ary  AR sequence with the directive sequence $\Delta=(\psi_{n})_{n=1}^{\infty}$. 
For all $N \in \mathbb N$, denote   $$s_N={\vec 1\hspace{0.01cm}}^T M_{\psi_1} M_{\psi_2}\cdots M_{\psi_N}\vec 1\,.$$  
Then $$E(\uu)=1+\sup\left\{\frac{s_N-d}{s_N-s_{N-1}}\right\} \quad \text{and} \quad E^*(\uu)=1+\limsup_{N \to \infty}\frac{s_N}{s_N-s_{N-1}} \,.$$
\end{theorem}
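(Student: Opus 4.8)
The plan is to combine \Cref{thm:FormulaForE} with \Cref{lem:BS_shortest_retwords} and simply rewrite everything in terms of the quantities $s_N$. First I would observe that the two length formulas in \Cref{lem:BS_shortest_retwords} can both be expressed via $s_N$: since ${\vec 1\hspace{0.01cm}}^T I \vec 1 = d$ and ${\vec 1\hspace{0.01cm}}^T M_{\psi_1}\cdots M_{\psi_{N-1}}(M_{\psi_N}-I)\vec 1 = s_N - {\vec 1\hspace{0.01cm}}^T M_{\psi_1}\cdots M_{\psi_{N-1}}\vec 1 = s_N - s_{N-1}$, we get
$$|b_N| = \frac{s_N - d}{d-1} \qquad \text{and} \qquad |r_N| = \frac{s_N - s_{N-1}}{d-1}\,.$$
Here I must be a little careful at the boundary: for $N=1$ the formula for $|r_N|$ reads $\frac{1}{d-1}{\vec 1\hspace{0.01cm}}^T(M_{\psi_1}-I)\vec 1 = \frac{(d-1)\cdot 1}{d-1}\cdot\,$ (the column of ${\vec 1}$'s picks out one extra letter per column), which equals $1$, consistent with taking $s_0 := {\vec 1\hspace{0.01cm}}^T\vec 1 = d$; so I would set the convention $s_0 = d$ so that $|r_1| = \frac{s_1 - s_0}{d-1}$ and $|b_1| = \frac{s_1 - d}{d-1} = \frac{s_1 - s_0}{d-1} = |r_1|$ hold uniformly (this is also forced by \Cref{coro:r_N_and_b_N}, since $b_0 = \varepsilon$).

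Next I would note that an AR sequence is uniformly recurrent (stated in the Preliminaries) and aperiodic when it is genuinely $d$-ary AR, so \Cref{thm:FormulaForE} applies verbatim: $E(\uu) = 1 + \sup_N \frac{|b_N|}{|r_N|}$ and $E^*(\uu) = 1 + \limsup_N \frac{|b_N|}{|r_N|}$. Substituting the two displayed expressions, the common factor $\frac{1}{d-1}$ cancels in the ratio, giving
$$\frac{|b_N|}{|r_N|} = \frac{s_N - d}{s_N - s_{N-1}}\,,$$
which yields the stated formula for $E(\uu)$ immediately. For the asymptotic version, I would argue that $s_N \to \infty$: each $M_{\psi_n}$ has nonnegative integer entries and ${\vec 1\hspace{0.01cm}}^T M_{\psi_n} \geq {\vec 1\hspace{0.01cm}}^T + \vec e_{i_n}^{\,T} \geq {\vec 1\hspace{0.01cm}}^T$ componentwise (applying $M_{\psi_n}^T$ to $\vec 1$ adds a copy of some coordinate), so $(s_N)$ is nondecreasing; and since $\uu$ is aperiodic, bispecial factors have unbounded length, so $|b_N| = \frac{s_N - d}{d-1} \to \infty$, forcing $s_N \to \infty$. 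Then $\frac{s_N - d}{s_N - s_{N-1}} - \frac{s_N}{s_N - s_{N-1}} = \frac{-d}{s_N - s_{N-1}}$, and since $s_N - s_{N-1} \geq |r_N| \cdot (d-1) \to \infty$ (return words to longer and longer bispecial factors grow in length for an aperiodic uniformly recurrent sequence), this difference tends to $0$; hence the two $\limsup$s coincide and $E^*(\uu) = 1 + \limsup_N \frac{s_N}{s_N - s_{N-1}}$.

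The only real subtlety — and the step I would treat most carefully — is the justification that $s_N - s_{N-1} = (d-1)|r_N| \to \infty$, i.e. that the shortest return words to the bispecial factors are unbounded in length; without this, the $\limsup$ in the asymptotic formula would not match the one coming directly from $\frac{|b_N|}{|r_N|}$. For an aperiodic uniformly recurrent sequence this is standard (bounded shortest-return-word length would force a bound on the recurrence function incompatible with aperiodicity, since $|b_N|\to\infty$ while $b_{N+1} = r_{N+1}b_N$ by \Cref{coro:r_N_and_b_N} forces $r_{N+1}$ to keep producing new longer words), but I would state it explicitly rather than leave it implicit. Everything else is routine linear algebra once the identifications $|b_N| = \frac{s_N-d}{d-1}$ and $|r_N| = \frac{s_N - s_{N-1}}{d-1}$ are in place.
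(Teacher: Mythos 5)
Your proposal is correct and follows exactly the route the paper intends (the paper states the theorem as an immediate consequence of Theorem~\ref{thm:FormulaForE} and Lemma~\ref{lem:BS_shortest_retwords}, leaving the substitution $|b_N|=\frac{s_N-d}{d-1}$, $|r_N|=\frac{s_N-s_{N-1}}{d-1}$ implicit). Your extra care about replacing $\frac{s_N-d}{s_N-s_{N-1}}$ by $\frac{s_N}{s_N-s_{N-1}}$ in the $\limsup$ is a valid refinement the paper glosses over; note that it even holds without proving $s_N-s_{N-1}\to\infty$, since $s_N\to\infty$ forces both quantities to blow up together along any subsequence on which $s_N-s_{N-1}$ stays bounded.
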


\section{Critical exponent of the $d$-bonacci sequence}\label{sec:d-bonacci}
The aim of this section is to prove that for the $d$-bonacci sequence $\uu_d$, it holds $E(\uu_d)=E^*(\uu_d)$. This was stated as a conjecture for $d\geq 4$ in~\cite{DL23} and proved for $d=2$ in~\cite{MiPi1992} and $d=3$ in~\cite{BoTan07}. The value $E^*(\uu_d)=2+\frac{1}{t-1}$, where $t>1$ is the unique positive root of the polynomial $x^d-x^{d-1}-\cdots -x-1$, was also determined in~\cite{GlJu2009, DL23}.

It is a well-known fact that the $d$-bonacci sequence $\uu_d$, defined in Example~\ref{ex:d-bonacci}, is the fixed point $\uu_d=\varphi(\uu_d)$ of the morphism 
$$\begin{array}{rccc} 
\varphi: 
&\tt 0 & \to & \tt 01 \\
&\tt 1 & \to & \tt 02 \\
& \tt 2 & \to & \tt 03 \\
& \vdots && \vdots \\
& \tt d-2 & \to & \tt 0(d-1) \\
& \tt d-1 & \to & \tt 0\,.
\end{array}$$

Let us denote $M$ the matrix of the morphism $\varphi$, i.e., for $k,j \in \{0, 1,\dots, d-1\}$ $$[M]_{kj}=\left\{ \begin{array}{rcl}
&1 &\text{if $k=0$};\\
&1 &\text{if $k=j+1$};\\
&0 &\text{otherwise}.\\
\end{array}\right.$$

\begin{example}
For a better idea, let us illustrate $M$ for $d=4$:
$$M=
\left(\begin{array}{cccc}
1&1&1&1\\
1&0&0&0\\
0&1&0&0\\
0&0&1&0
\end{array}\right)\,.$$
\end{example}

In the sequel, we will write $M_i$ instead of $M_{\varphi_{\tt i}}$ for the morphisms from~\eqref{eq:elementary_morphisms}. Moreover, $M_n$ for $n\in \mathbb N$ means $M_i$, where $i=n\hspace{-0.1cm}\mod d$.
\medskip

In order to compute $E^*(\uu_d)$ and $E(\uu_d)$, we will use the following lemmata. 
\begin{lemma}\label{lem:s_n_simplified}
Let $\uu_d$ be the $d$-bonacci sequence.
Then 
$$s_N={\vec 1\hspace{0.01cm}}^T M_{{{0}}} M_{{{1}}}\cdots M_{{{{N-1}}}}\vec 1={\vec 1\hspace{0.01cm}}^T M^{N}\vec 1$$ for all $N \in \mathbb N, N \geq 1$, and $s_0=d$.
\end{lemma}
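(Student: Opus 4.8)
The plan is to prove both equalities at once by exhibiting the directive sequence of $\uu_d$ explicitly and then identifying the product of the first $N$ incidence matrices with a power of $M$. Recall from Example~\ref{ex:d-bonacci} that $\uu_d$ has directive sequence $\Delta=(\varphi_{\tt 0}\varphi_{\tt 1}\cdots\varphi_{\tt d-1})^\omega$, so with the notational convention $M_n=M_{\varphi_{\tt i}}$ for $i\equiv n\pmod d$, Theorem~\ref{thm:formulaE*andE} gives $s_N={\vec 1\hspace{0.01cm}}^T M_0 M_1\cdots M_{N-1}\vec 1$ for $N\geq 1$, and $s_0={\vec 1\hspace{0.01cm}}^T I\vec 1=d$ handles the base case. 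So the entire content of the lemma is the matrix identity
\begin{equation}\label{eq:keyidentity}
M_0 M_1\cdots M_{N-1}=M^N\qquad\text{for all }N\geq 1.
\end{equation}

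The cleanest route to~\eqref{eq:keyidentity} is at the level of morphisms rather than matrices: I would show that $\varphi_{\tt 0}\varphi_{\tt 1}\cdots\varphi_{\tt d-1}=\varphi^d$ as morphisms on $\{\tt 0,\dots,\tt d-1\}^*$, where $\varphi$ is the $d$-bonacci morphism ${\tt 0}\mapsto{\tt 01},\ \dots,\ {\tt d-2}\mapsto{\tt 0(d-1)},\ {\tt d-1}\mapsto{\tt 0}$; passing to incidence matrices then yields $M_{\varphi_{\tt 0}}\cdots M_{\varphi_{\tt d-1}}=M^d$, and more generally truncating the periodic directive sequence after $N$ terms gives~\eqref{eq:keyidentity}. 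Concretely, from $\uu_d=\varphi_{\tt 0}\varphi_{\tt 1}\cdots\varphi_{\tt d-1}(\uu_d)$ (Theorem~\ref{thm:directiveAR}) and $\uu_d=\varphi(\uu_d)$, one knows that $\varphi_{\tt 0}\cdots\varphi_{\tt d-1}$ and $\varphi^d$ agree on the infinite word $\uu_d$; but since $\uu_d$ contains every letter (it is $d$-ary and uniformly recurrent) and both morphisms are determined by their images of the letters appearing as factors, the images of single letters must coincide, proving the morphism identity. Alternatively, and perhaps more transparently, I would just compute $\varphi_{\tt 0}\varphi_{\tt 1}\cdots\varphi_{\tt d-1}({\tt j})$ directly for each letter $\tt j$ and compare with $\varphi^d({\tt j})$; a short induction on $k$ shows $\varphi_{\tt 0}\cdots\varphi_{\tt k}({\tt j})$ has a predictable form, and evaluating at $k=d-1$ reproduces the $d$-fold iterate of $\varphi$.

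For the partial products with $N$ not a multiple of $d$, write $N=qd+\ell$ with $0\le\ell<d$. Then $M_0 M_1\cdots M_{N-1}=(M_0\cdots M_{d-1})^q\,M_0\cdots M_{\ell-1}=M^{qd}(M_0\cdots M_{\ell-1})$, so it suffices to check $M_0 M_1\cdots M_{\ell-1}=M^\ell$ for $0\le\ell\le d-1$, i.e.\ the identity~\eqref{eq:keyidentity} for the initial segment; this is again a finite computation (or the same induction as above stopped early). Contracting ${\vec 1\hspace{0.01cm}}^T(\cdot)\vec 1$ on both sides of~\eqref{eq:keyidentity} then gives $s_N={\vec 1\hspace{0.01cm}}^T M^N\vec 1$ for all $N\ge 1$, completing the proof. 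The only mildly delicate point is bookkeeping in the induction that identifies $\varphi_{\tt 0}\cdots\varphi_{\tt k}$ — one must track how the prefix letter $\tt 0$ is inserted and how the "shift" of the other letters propagates — but there is no conceptual obstacle, and nothing here depends on $\uu_d$ being AR beyond the already-cited facts; the whole lemma is essentially the observation that the periodic episturmian directive sequence for $\uu_d$ folds, block by block of length $d$, into powers of the single $d$-bonacci morphism.
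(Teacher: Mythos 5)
Your reduction of the lemma to the matrix identity $M_0M_1\cdots M_{N-1}=M^N$ contains a genuine gap: that identity is \emph{false} whenever $N$ is not a multiple of $d$. Already for $N=1$ one has $M_0\neq M$ for every $d\ge 2$: indeed $[M_0]_{11}=|\varphi_{\tt 0}({\tt 1})|_{\tt 1}=1$ while $[M]_{11}=|\varphi({\tt 1})|_{\tt 1}=0$ (for $d=2$, explicitly $M_0=\left(\begin{smallmatrix}1&1\\0&1\end{smallmatrix}\right)$ but $M=\left(\begin{smallmatrix}1&1\\1&0\end{smallmatrix}\right)$). Consequently the "finite computation" you defer for the initial segments $M_0\cdots M_{\ell-1}=M^\ell$, $1\le\ell\le d-1$, would fail at $\ell=1$. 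What is true — and what the paper proves by induction — is the corrected identity $M_0M_1\cdots M_{N-1}R^{N}=M^{N}$, where $R$ is the cyclic-shift permutation matrix, using $R^TM_{i+1}R=M_i$; the lemma's scalar statement then follows because $R\vec 1=\vec 1$ and ${\vec 1\hspace{0.01cm}}^TR^T={\vec 1\hspace{0.01cm}}^T$, so the permutation disappears after contracting with ${\vec 1\hspace{0.01cm}}^T(\cdot)\vec 1$. Your block-of-length-$d$ observation is correct as far as it goes ($\varphi_{\tt 0}\varphi_{\tt 1}\cdots\varphi_{\tt d-1}=\varphi^d$ does hold, which settles the case $d\mid N$), but the truncated partial products are only conjugates of powers of $M$ by powers of $R$, not powers of $M$ themselves, so the general case cannot be obtained by your truncation argument.

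A secondary weakness: you infer the morphism identity $\varphi_{\tt 0}\cdots\varphi_{\tt d-1}=\varphi^d$ from the fact that both morphisms fix (or agree on) $\uu_d$. Agreement of two non-erasing morphisms on a single infinite sequence does not in general force their images of individual letters to coincide (for instance, distinct morphisms such as $\varphi$ and $\varphi^2$ share the fixed point $\uu_d$; equality of images of a sequence likewise does not synchronize the letter images without further argument). Your alternative route — the direct letter-by-letter induction computing $\varphi_{\tt 0}\cdots\varphi_{\tt k}({\tt j})$ — is the sound way to get that identity, but as noted it only covers $N\equiv 0 \pmod d$; to repair the proof for all $N$ you need the permutation-twisted identity (or an equivalent bookkeeping of the cyclic relabelling), as in the paper.
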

\begin{proof}
We will need the following three observations. Let $R$ denote a $d\times d$ permutation matrix defined by $R=(\vec e_1 \dots \vec e_{d-1} \ \vec e_0)$.
\begin{enumerate}
\item $RR^T=I$ and $R^T\vec 1=\vec 1$;
\item $R^T M_{{i+1}}R=M_{i}$ for $i \in \mathbb N$;
\item $M_{{{0}}} M_{{{1}}}\cdots M_{{{{N-1}}}}R^{N}=M^{N}$ for all $N \in \mathbb N, N \geq 1$. \end{enumerate}
Let us prove by induction the third claim using the first two observations, which are simple to verify: $M_{{{0}}}R=M^1$ by definition of the matrices.
Assume $M_{{{0}}} M_{{{1}}}\cdots M_{{{{N-1}}}}R^{N}=M^{N}$ for some $N\geq 1$. Then 
$M_{{{0}}} M_{{{1}}}\cdots M_{{{{N}}}}R^{N+1}=M^{N}(R^T)^NM_{{{{N}}}}R^{N+1}=M^N M_0 R=M^{N+1}$.

Using the above observations, we may write
$$\begin{array}{rcl}
s_N&=&{\vec 1\hspace{0.01cm}}^T M_{{{0}}} M_{{{1}}}\cdots M_{{{{N-1}}}}\vec 1\\
&=&{\vec 1\hspace{0.01cm}}^T M_{{{0}}} M_{{{1}}}\cdots M_{{{{N-1}}}}R^N (R^T)^N\vec 1\\
&=&{\vec 1\hspace{0.01cm}}^T M^{N}\vec 1\,.
\end{array}$$
\end{proof}

\begin{lemma}\label{lem:recurrence_s_n}
Let $\uu_d$ be the $d$-bonacci sequence. 
Then $(s_N)$ satisfies the $d$-bonacci recurrence relation, i.e.,
$$s_N=s_{N-1}+\dots + s_{N-d} \quad \text{for $N \geq d$}$$
and $s_N=(d-1)2^N+1$ for $N \in \{0,1,\dots, d-1\}$.
\end{lemma}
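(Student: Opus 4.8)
The plan is to reduce everything to the single matrix $M$. By Lemma~\ref{lem:s_n_simplified} we have $s_N=\vec 1^{\,T}M^{N}\vec 1$ for $N\ge 1$, and $s_0=d=\vec 1^{\,T}M^{0}\vec 1$, so it suffices to establish two purely linear-algebraic facts about $M$: that $\vec 1^{\,T}M^{N}\vec 1$ satisfies the $d$-bonacci recurrence, and that it has the claimed closed form for small $N$.

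For the recurrence I would invoke the Cayley--Hamilton theorem. The matrix $M$ is the companion-type matrix of the $d$-bonacci relation, and its characteristic polynomial is exactly $x^{d}-x^{d-1}-\cdots -x-1$ (classical; one checks it by expanding $\det(xI-M)$ along the first row, or by an easy induction on $d$). Hence $M^{d}=M^{d-1}+\cdots +M+I$, and multiplying this identity on the right by $M^{N-d}$ for any $N\ge d$ gives $M^{N}=M^{N-1}+\cdots +M^{N-d}$. Multiplying on the left by $\vec 1^{\,T}$ and on the right by $\vec 1$ yields $s_N=s_{N-1}+\cdots +s_{N-d}$ for $N\ge d$ (the case $N=d$ using $M^{0}=I$, i.e.\ $s_0=d$).

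For the initial values I would return to the $S$-adic form $s_N=\vec 1^{\,T}M_{0}M_{1}\cdots M_{N-1}\vec 1$ and compute the row vector $\vec 1^{\,T}M_{0}M_{1}\cdots M_{N-1}$ explicitly. From~\eqref{eq:elementary_morphisms} one reads off the elementary identities $\vec 1^{\,T}M_{i}=2\vec 1^{\,T}-\vec e_i^{\,T}$, $\vec e_i^{\,T}M_{i}=\vec 1^{\,T}$, and $\vec e_k^{\,T}M_{i}=\vec e_k^{\,T}$ whenever $k\ne i$. Since for $1\le N\le d$ the subscripts $0,1,\dots ,N-1$ are pairwise distinct, a short induction on $N$ gives
\[
\vec 1^{\,T}M_{0}M_{1}\cdots M_{N-1}=2^{N}\vec 1^{\,T}-\sum_{k=0}^{N-1}2^{k}\vec e_k^{\,T}.
\]
Evaluating at $\vec 1$ and using $\vec 1^{\,T}\vec 1=d$ together with $\sum_{k=0}^{N-1}2^{k}=2^{N}-1$ gives $s_N=d\,2^{N}-(2^{N}-1)=(d-1)2^{N}+1$ for $N\in\{1,\dots ,d-1\}$, and the same formula covers $N=0$.

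I do not expect a genuine obstacle here; the statement is essentially bookkeeping. The only points needing care are (i) pinning down the characteristic polynomial of $M$ correctly, so that Cayley--Hamilton produces precisely the $d$-bonacci relation, and (ii) remembering that the inductive formula for $\vec 1^{\,T}M_{0}\cdots M_{N-1}$ relies on the subscripts being distinct and is therefore valid only for $N\le d$ — which is exactly the range in which the closed form for $s_N$ is asserted.
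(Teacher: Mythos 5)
Your proposal is correct and follows essentially the same route as the paper: the recurrence is obtained, exactly as in the paper, from the Cayley--Hamilton theorem applied to the characteristic polynomial $x^d-x^{d-1}-\cdots-x-1$ of $M$, combined with $s_N=\vec 1^{\,T}M^{N}\vec 1$ from Lemma~\ref{lem:s_n_simplified}. The only cosmetic difference is in the initial values, where the paper computes the column vector $M^{N}\vec 1$ explicitly for $N\le d-1$ while you compute the row vector $\vec 1^{\,T}M_{0}M_{1}\cdots M_{N-1}$ by induction from the elementary matrices; both are the same straightforward bookkeeping and yield $(d-1)2^{N}+1$.
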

\begin{proof}
First, let us check the initial conditions.
Clearly, $s_0=d=(d-1)2^0+1$. It is easy to prove for $1\leq N \leq d-1$
$$M^{N}\vec 1=\Bigl(
(d-1)2^{N-1}\!+\!1, \  (d-1)2^{N-2}\!+\!1, \  \ldots , \   (d-1)2^0\!+\!1, \! \underbrace{1, \ldots, 1}_{(d-N)-\text{times}}\!\!\Bigr)^T
\,.$$ 

Using Lemma~\ref{lem:s_n_simplified}, we have $s_N={\vec 1\hspace{0.01cm}}^T M^{N}\vec 1=(d-1)2^N+1$ for $1\leq N \leq d-1$.
 
The characteristic polynomial of $M$ equals $x^d-x^{d-1}-\cdots -x-1$. Therefore, by the Cayley-Hamilton theorem, we have $M^d=M^{d-1}+\dots +M+I$. Consequently,
$M^N=M^{N-1}+\dots+M^{N-d}$ for $N\geq d$. Using Lemma~\ref{lem:s_n_simplified}, multiplying both sides of the equation by ${\vec 1\hspace{0.01cm}}^T$ from the left and $\vec 1$ from the right gives us the announced recurrence relation for $(s_N)$.
\end{proof}

Using the standard solution procedure of the recurrence relations (see Appendix), we get 
\begin{equation}\label{eq:explicit}
    s_N=\sum_{k=1}^d c_k t_k^N, \quad \text{where $c_k=\frac{(d-1)t_k}{(d+1)t_k-2d}$}\,.
\end{equation}

In the sequel, we will use some properties of the polynomial $p(x)=x^d-x^{d-1}-\cdots-x-1$. 
  As proven in \cite{Ba1951},  it is the minimal polynomial of a Pisot number, i.e., one root, say   $t_1=t>1$  and all other roots $ t_2, \dots, t_d$ are of modulus strictly smaller than 1.

\begin{description}
\item[Property 1]:  \ \ $2-\frac{1}{2^{d-1}}<t<2-\frac{1}{2^d}$ (see~\cite{BPT18});
\item[Property 2]:  \ \  $t_k^{d}=\frac{1}{2-t_k}$, \ as  $0=(t_k-1)p(t_k) =t_k^{d+1}-2t_k^d+1$.
\end{description}

\medskip

\begin{theorem}\label{thm:EandE*} Let $\uu_d$ be the $d$-bonacci sequence. Then 
$$E(\uu_d)=E^*(\uu_d)=2+\frac{1}{t-1},$$ where $t>1$ is the unique positive root of the polynomial $x^d-x^{d-1}-\cdots -x-1$.
\end{theorem}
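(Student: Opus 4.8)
The plan is to evaluate the two suprema/limits appearing in Theorem~\ref{thm:formulaE*andE} for the specific directive sequence $\Delta=(\varphi_{\tt 0}\varphi_{\tt 1}\cdots\varphi_{\tt d-1})^\omega$ of $\uu_d$, using the simplification $s_N={\vec 1\hspace{0.01cm}}^TM^N\vec 1$ from Lemma~\ref{lem:s_n_simplified} together with the recurrence and explicit form from Lemma~\ref{lem:recurrence_s_n} and \eqref{eq:explicit}. The asymptotic part should be the easier one: since $p(x)$ is the minimal polynomial of a Pisot number with dominant root $t>1$ and all other roots inside the unit disk, the explicit formula $s_N=\sum_{k=1}^d c_kt_k^N$ gives $s_N=c_1t^N(1+o(1))$, hence
\begin{equation*}
\lim_{N\to\infty}\frac{s_N}{s_N-s_{N-1}}=\lim_{N\to\infty}\frac{1}{1-s_{N-1}/s_N}=\frac{1}{1-1/t}=\frac{t}{t-1}\,,
\end{equation*}
so $E^*(\uu_d)=1+\frac{t}{t-1}=2+\frac{1}{t-1}$ by Theorem~\ref{thm:formulaE*andE}.

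For the exact critical exponent $E(\uu_d)=1+\sup_N\frac{s_N-d}{s_N-s_{N-1}}$, the strategy is to show that the supremum is in fact a limit, i.e.\ that the sequence $a_N:=\frac{s_N-d}{s_N-s_{N-1}}$ approaches but never exceeds its limiting value $\frac{t}{t-1}$; equivalently, that $a_N<\frac{t}{t-1}$ for every $N\ge1$. Rearranging, this inequality is $(t-1)(s_N-d)<t(s_N-s_{N-1})$, i.e.\ $t\,s_{N-1}-s_N<(t-1)d$, i.e.\ $s_N-t\,s_{N-1}>d-td=-(t-1)d$. So I would study the auxiliary quantity $\delta_N:=s_N-t\,s_{N-1}$ and prove the two-sided control $-(t-1)d<\delta_N\le\text{(something tending to }0)$. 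Plugging in the explicit formula, $\delta_N=\sum_{k=1}^d c_k(t_k-t)t_k^{N-1}=\sum_{k=2}^d c_k(t_k-t)t_k^{N-1}$, since the $k=1$ term vanishes; this is a sum of $d-1$ terms each decaying geometrically, so $\delta_N\to0$ and in particular is eventually small and positive-or-bounded. The remaining work is to check the finitely many small $N$ by hand (using $s_N=(d-1)2^N+1$ for $0\le N\le d-1$ and the recurrence afterwards) and to get a clean bound on the tail $\sum_{k=2}^d c_k(t_k-t)t_k^{N-1}$ — here Property~1 ($2-2^{1-d}<t<2-2^{-d}$) and Property~2 ($t_k^d=1/(2-t_k)$) are the tools for bounding $|c_k|$ and $|t_k|$, and one also needs that $t-1>0$ so that $-(t-1)d<0\le\delta_N$ would already suffice once $\delta_N\ge0$ is established.

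An alternative, possibly cleaner route avoids the roots $t_2,\dots,t_d$ entirely: work directly with the integer recurrence. From $s_N=s_{N-1}+\cdots+s_{N-d}$ one gets $s_N-s_{N-1}=s_{N-1}-s_{N-1-d}$ (telescoping the recurrence at consecutive indices), and more useful identities relating $s_N$, $s_{N-1}$ and $t$ can be extracted from the fact that $t^d=t^{d-1}+\cdots+1$. The inequality $s_N-t\,s_{N-1}>-(t-1)d$ can then be attacked by induction on $N$: assuming it (and perhaps a companion upper bound) for indices below $N$, expand $s_N-t\,s_{N-1}$ via the recurrence and the analogous expression for $t$, and reduce to the inductive hypotheses. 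I expect the main obstacle to be precisely this step — proving the uniform bound $s_N-t\,s_{N-1}>-(t-1)d$ for all $N$, equivalently ruling out that some early bispecial factor of $\uu_d$ produces a repetition exponent strictly larger than the asymptotic value. This is a genuinely arithmetic claim about the $d$-bonacci numbers, and handling it uniformly in $d$ (rather than $d$ by $d$) while keeping the base cases $N\le d-1$ under control is the delicate part; the dominant-eigenvalue estimates give the behaviour for large $N$ essentially for free, so the crux is the transitional range and obtaining an error bound sharp enough that no overshoot occurs.
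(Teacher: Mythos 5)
Your plan coincides with the paper's proof in structure: the asymptotic half (dominant root in the explicit formula \eqref{eq:explicit} gives $E^*(\uu_d)=1+\frac{t}{t-1}$ via Theorem~\ref{thm:formulaE*andE}) is exactly the paper's argument, and your reduction of the exact half to the uniform inequality $t\,s_{N-1}-s_N\le (t-1)d$, with $s_N-t\,s_{N-1}=\sum_{k\ge 2}c_k(t_k-t)t_k^{N-1}$, is precisely where the paper goes. The gap is that you stop at this point: that uniform bound \emph{is} the content of the statement $E(\uu_d)=E^*(\uu_d)$, you name Properties 1 and 2 as ``the tools'' without combining them, and you yourself flag the step as the main obstacle, worrying about a delicate transitional range and about uniformity in $d$. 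No such difficulty arises, and the missing estimate is short. For $N\ge d$ write $t\,s_{N-1}-s_N=\sum_{k=2}^{d}\frac{(d-1)(t-t_k)\,t_k^{N}}{(d+1)t_k-2d}$; since $|t_k|<1$ one has $\bigl|\frac{d-1}{(d+1)t_k-2d}\bigr|\le\frac{d-1}{2d-(d+1)}=1$ and $|t-t_k|<|2-t_k|$ (because $\mathrm{Re}\,t_k<1<\frac{t+2}{2}$), so each summand is at most $|2-t_k|\,|t_k|^{N}=|t_k|^{N-d}\le 1$ by Property 2; hence $t\,s_{N-1}-s_N\le d-1<(t-1)d$, the last inequality by Property 1. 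For $1\le N\le d$ the initial values $s_N=(d-1)2^N+1$ (and $s_d=s_{d-1}+\cdots+s_0$) give $t\,s_{N-1}-s_N=(d-1)2^{N-1}(t-2)+t-1<t-1\le (t-1)d$, so the small cases are immediate and uniform in $d$. The idea you were missing is exactly the use of Property 2 to convert the factor $|2-t_k|$ into a shift of the exponent by $d$, which makes the tail bound valid from $N=d$ on, with no overshoot left to exclude.

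One further correction: your remark that establishing $\delta_N=s_N-t\,s_{N-1}\ge 0$ ``would already suffice'' cannot be realized, because $\delta_N$ is negative in general (already for $d=2$, $N=1$ one has $s_1-t\,s_0=3-2t<0$); the lower bound $-(t-1)d$ in your two-sided control is what is actually needed, and it is what the estimate above delivers. Your alternative route via the integer recurrence and induction is only sketched and is not needed once the explicit-formula bound is carried out.
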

\begin{proof} Since $t_1 = t >1$ and all other roots $t_k$ of the polynomial $p(x)$ are in modulus smaller than 1,  Equation~\eqref{eq:explicit} implies 
\begin{equation}\label{eq:asymptotika}s_N = c_1t^N + o(1).\end{equation}  
Using Theorem~\ref{thm:formulaE*andE},  we have 
$$E^*(\uu_d)=1+\lim_{N \to \infty} \frac{s_N}{s_N-s_{N-1}}=1+\frac{t}{t-1}=2+\frac{1}{t-1}\,.$$

As  $E^*(\uu_d) \leq E(\uu_d)$ by definition,  it suffices to show that $E(\uu_d) \leq 2+\frac{1}{t-1}$ in order to complete the proof of Theorem~\ref{thm:EandE*}. That is, according to Theorem~\ref{thm:formulaE*andE}, to prove  for every $N \geq 1$
$$1+\frac{s_N-d}{s_N-s_{N-1}} \leq 1+\frac{t}{t-1}\,,  \ \text{ or equivalently, } \ \  t s_{N-1}-s_N \leq (t-1)d.$$
For $1 \leq N \leq d$, it suffices to use the initial conditions for $s_N$ from Lemma~\ref{lem:recurrence_s_n} and the fact that $t<2$ to verify the inequality.

For $N \geq d$, we  exploit that $2>t=t_1>1$ and the root $t_k$ is in modulus smaller than  $1$ for $k \in \{2, \ldots, d\}$.   Hence,  
$$\Bigl|\frac{d-1}{(d+1)t_k-2d}\Bigr| \leq \frac{d-1}{2d - (d+1)} = 1\qquad \text{and} \qquad {|t-t_k|}<  {|2-t_k|}.$$ 
Replacing  $s_N$ by the expression~\eqref{eq:explicit}  and using  Property  2 we get 
$$t s_{N-1}-s_N=\sum_{k=2}^d \frac{(d-1)(t-t_k)t_k^{N}}{(d+1)t_k-2d} \leq \sum_{k=2}^d |(2-t_k)t_k^N| =\sum_{k=2}^d {|t_k|}^{N-d}$$
Therefore, for $N\geq d$, 
$$ t s_{N-1}-s_N < \sum_{k=2}^d 1= d-1 < (t-1)d\,,$$
where the last estimate comes from  Property 1. This completes the proof.

\end{proof}

\begin{remark}\label{rem:RT}
For a better idea, let us inspect more the value $E(\uu_d)$.
To emphasize the size of the alphabet, let us write $t(d)$ for the unique positive root of $x^d-x^{d-1}-\cdots-x-1$.
By Property 1, we have $$3+\frac{1}{2^d-1}<E(\uu_d)=2+\frac{1}{t(d)-1}<3+\frac{1}{2^{d-1}-1} <E(\uu_{d-1})\,.$$
Thus, $E(\uu_d) = E^*(\uu_d)$ is strictly decreasing when the alphabet size increases and $\lim\limits_{d \to \infty} E(\uu_d)=3$.
For illustration, we computed the approximate values of $E(\uu_d)$ for $d\in\{2,3,\dots,7\}$:
$$\begin{array}{c|c|c|c|c|c|c}
    d &  2 & 3 & 4 & 5 & 6 & 7 \\ \hline
    t(d) & 1.618 &  1.839 & 1.928 & 1.966 & 1.984 & 1.992\\ \hline
    E(\uu_d) 
    & 3.618 &  3.191 & 3.078 & 3.035 & 3.017 & 3.008\\ \hline
\end{array}$$
\end{remark}

\section{Maximal length of the $N$-th bispecial factor in AR sequences}
In this section, for a fixed alphabet size $d\geq 2$ and a fixed $N \in \N$, we will determine an AR sequence $\uu$ with the longest $N$-th bispecial factor $b_N$.  We will show that it is exactly the $d$-bonacci sequence that has the longest bispecial factors. For this purpose, we need several technical lemmata.

According to Lemma \ref{lem:BS_shortest_retwords} and notation from Theorem \ref{thm:formulaE*andE}, it holds for the length $|b_N|$  of the $N$-th bispecial factor that $s_N = (d-1)|b_N| +d$. Therefore, it suffices to find the maximum  of the set 
\begin{equation}\label{eq:mnozinaS} \mathcal{S}(d,N) =\Bigl\{\vec 1^{\,T}\, M_{h_1} M_{h_2}\cdots M_{h_N}\,\vec 1\ :  h_1, h_2, \ldots, h_N \in \{0,1,\ldots, d-1\} \Bigr\}\,.\end{equation}

\begin{lemma}\label{le:oPermutaci} Let $\pi$ be a permutation on the set $\{0,1,\ldots, d-1\}$  and $P \in \N^{d\times d}$ be the permutation matrix  corresponding to $\pi$. Then 
$P^T M_kP = M_{\pi(k)}$ for every
$k\in \{0,1,\ldots, d-1\}$.    
\end{lemma}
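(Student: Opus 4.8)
The claim is a pure linear-algebra identity, so the plan is to verify the equality $P^T M_k P = M_{\pi(k)}$ entry by entry, or equivalently column by column. The key is to use the defining relation $\vec e_i^{\,T} P = \vec e_{\pi(i)}^{\,T}$, which gives $P\vec e_i = \vec e_{\pi^{-1}(i)}$ by transposing (here I rely on $P$ being a permutation matrix, so $P^T = P^{-1}$). Thus $P^T M_k P$ acts on a basis vector $\vec e_j$ by first sending it to $\vec e_{\pi^{-1}(j)}$, then applying $M_k$, then applying $P^T$. I would compute $M_k \vec e_{\pi^{-1}(j)}$ using the explicit description from~\eqref{eq:elementary_morphisms}: the column of $M_{\varphi_{\tt i}}$ indexed by a letter $\ell$ equals $\vec e_\ell$ if $\ell = i$, and $\vec e_i + \vec e_\ell$ otherwise.

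Concretely, I would split into two cases according to whether $\pi^{-1}(j) = k$ (equivalently $j = \pi(k)$) or not. If $j = \pi(k)$, then $M_k \vec e_{\pi^{-1}(j)} = M_k \vec e_k = \vec e_k$, and applying $P^T$ gives $P^T \vec e_k = \vec e_{\pi(k)} = \vec e_j$; this matches the $j$-th column of $M_{\pi(k)}$ since $j = \pi(k)$. If $j \neq \pi(k)$, then $\pi^{-1}(j) \neq k$, so $M_k \vec e_{\pi^{-1}(j)} = \vec e_k + \vec e_{\pi^{-1}(j)}$, and applying $P^T$ (which is linear and sends $\vec e_i \mapsto \vec e_{\pi(i)}$, because $\vec e_i^{\,T} P = \vec e_{\pi(i)}^{\,T}$ transposes to $P^T \vec e_i = \vec e_{\pi(i)}$) yields $\vec e_{\pi(k)} + \vec e_j$; this is exactly the $j$-th column of $M_{\pi(k)}$ since $j \neq \pi(k)$. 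Combining the two cases gives the identity on all basis vectors, hence $P^T M_k P = M_{\pi(k)}$.

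I anticipate the only real subtlety is bookkeeping the direction of the permutation: the paper's convention $\vec e_i^{\,T} P = \vec e_{\pi(i)}^{\,T}$ means $P^T \vec e_i = \vec e_{\pi(i)}$ and $P \vec e_i = \vec e_{\pi^{-1}(i)}$, so one must be careful not to conflate $\pi$ and $\pi^{-1}$ when reading off columns. Once that is pinned down, everything reduces to the three-line column computation above. An alternative, perhaps cleaner, route would be to argue at the level of morphisms: let $\sigma$ be the letter-permutation morphism with $M_\sigma = P$; then $M_{\sigma^{-1} \varphi_{\tt k} \sigma} = M_\sigma^{-1} M_{\varphi_{\tt k}} M_\sigma = P^{-1} M_k P = P^T M_k P$, and a direct check shows $\sigma^{-1} \varphi_{\tt k} \sigma = \varphi_{\sigma^{-1}({\tt k})}$, i.e.\ the index is permuted accordingly — but this again hinges on getting the same convention straight, so I would just present the direct matrix computation.
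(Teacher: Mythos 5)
Your proof is correct. It differs from the paper's in its decomposition of the argument: the paper first observes that every permutation is a composition of transpositions, reduces to the case where $\pi$ is a transposition exchanging $i\leftrightarrow j$, and then simply asserts that a straightforward computation gives $P^TM_kP=M_k$ for $k\neq i,j$, $P^TM_jP=M_i$ and $P^TM_iP=M_j$. You instead verify the identity $P^TM_kP=M_{\pi(k)}$ directly for an arbitrary permutation, column by column, using $P\vec e_j=\vec e_{\pi^{-1}(j)}$, the explicit columns $M_k\vec e_k=\vec e_k$ and $M_k\vec e_\ell=\vec e_k+\vec e_\ell$ ($\ell\neq k$), and $P^T\vec e_i=\vec e_{\pi(i)}$, splitting into the cases $j=\pi(k)$ and $j\neq\pi(k)$; both cases check out. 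Your route is slightly more self-contained: it makes the $\pi$ versus $\pi^{-1}$ bookkeeping explicit (exactly the place where the paper's unstated "straightforward computation" could trip someone up) and it avoids the implicit compatibility check that conjugating by a product of transposition matrices composes correctly under the convention $\vec e_i^{\,T}P=\vec e_{\pi(i)}^{\,T}$. The paper's reduction buys brevity; your direct verification buys completeness at essentially no extra cost. The alternative morphism-level argument you sketch at the end ($\sigma^{-1}\varphi_{\tt k}\sigma=\varphi_{\sigma^{-1}({\tt k})}$) is also sound, and you are right that it only shifts, rather than removes, the convention bookkeeping, so presenting the matrix computation is the better choice.
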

\begin{proof} Every permutation is a composition of transpositions. It thus suffices to prove the statement for any transposition $\pi$. 

Let $i,j \in \{0,1,\ldots, d-1\}, i\neq j$, and let $\pi$ be the transposition exchanging  $i\leftrightarrow j$. A straightforward computation gives
$P^TM_kP=M_k$, for $k \neq i,j$, $P^TM_jP=M_i$ and  $P^TM_iP=M_j$. Hence $P^TM_kP = M_{\pi(k)}$ for every $k$. 
\end{proof}

In the sequel we will for  vectors $\vec u, \vec v \in \mathbb{R}^d$ write $\vec u \leq \vec v$ if  the $i^{th}$ component of $\vec u$ is smaller than or equal to the $i^{th}$ component of $\vec v$, for every $i \in \{0,1,\dots, d-1\}$. We say that a vector $\vec v\in \mathbb{R}^d$ is {\em positive} if each of its components is a~positive number. We will also use the following property of the matrix $M_k$ of the morphism  $\varphi_{\tt k}$
$$M_k\vec e_{k} =  \vec e_{k} \quad \text{and}\quad  M_k\vec e_{j} =  \vec e_{k}  + \vec e_j\quad  \text{for\ } j \neq k.$$

\begin{lemma}\label{le:prvnisloupec} Let $k \in \{1,2,\dots, d-1\} $. Denote $A=M_0M_1\cdots M_{k-1}$.  Then $A \vec e_0 \leq A\vec e_j$ for every $j \geq k$. Moreover, the inequality is strict in at least one component of the vectors.   
\end{lemma}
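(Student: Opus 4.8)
The statement to prove is that, writing $A=M_0M_1\cdots M_{k-1}$ for $k\in\{1,\dots,d-1\}$, we have $A\vec e_0\le A\vec e_j$ for every $j\ge k$, with strict inequality in at least one coordinate. The plan is to compute $A\vec e_0$ and $A\vec e_j$ explicitly, exploiting the simple action of each $M_i$ on basis vectors recalled just before the lemma, namely $M_i\vec e_i=\vec e_i$ and $M_i\vec e_\ell=\vec e_i+\vec e_\ell$ for $\ell\ne i$. First I would track $A\vec e_0$: applying $M_{k-1}$ first to $\vec e_0$ (note $0\ne k-1$ since $k\ge1$) gives $\vec e_0+\vec e_{k-1}$; then $M_{k-2}$ sends $\vec e_0\mapsto\vec e_0+\vec e_{k-2}$ and $\vec e_{k-1}\mapsto\vec e_{k-1}+\vec e_{k-2}$, and so on. An easy induction shows that $M_0M_1\cdots M_{k-1}\vec e_0 = \vec e_0 + \vec e_{k-1} + 2(\vec e_0+\vec e_1+\cdots+\vec e_{k-2})$-type expression; I would state the precise closed form as a sub-claim and prove it by induction on $k$, processing the product from the innermost matrix $M_{k-1}$ outward.

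\textbf{Key steps.} The crucial observation is that for $j\ge k$, the letter $j$ is \emph{fixed} by every one of the matrices $M_0,\dots,M_{k-1}$ in the sense that $j\notin\{0,1,\dots,k-1\}$, so $M_i\vec e_j = \vec e_i+\vec e_j$ for each $i\in\{0,\dots,k-1\}$, and the extra $\vec e_j$ component is never destroyed. Tracking $A\vec e_j$ by induction then yields exactly the same vector as $A\vec e_0$ \emph{except} that the ``$0$-letter'' role is replaced by the $j$-th coordinate, plus possibly an extra contribution. Concretely I expect $A\vec e_j = A\vec e_0 + \vec w$ where $\vec w$ has nonnegative coordinates and $\vec w$ is not the zero vector — indeed $\vec w$ will contain $\vec e_j$ (since $j\ge k$, the $j$-th coordinate of $A\vec e_0$ is $0$, while the $j$-th coordinate of $A\vec e_j$ is at least $1$). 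Once the two explicit vectors are in hand, the componentwise inequality $A\vec e_0\le A\vec e_j$ and the strictness in the $j$-th component (at minimum) follow by inspection. It is cleanest to set up a single induction on $k$ that simultaneously carries the closed forms of $A\vec e_0$ and $A\vec e_j$, rather than two separate computations, so that the inductive step only needs the action of the one additional matrix $M_0$ prepended (or $M_{k-1}$ appended, depending on the direction chosen).

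\textbf{Main obstacle.} The only real subtlety is bookkeeping: deciding whether to peel matrices off the left or the right of the product $M_0\cdots M_{k-1}$ and keeping the index ranges straight (e.g.\ ensuring $0\ne k-1$, which needs $k\ge1$, exactly the hypothesis). Peeling from the right is more natural because $M_{k-1}$ acts first on $\vec e_0$ and on $\vec e_j$ and its effect is transparent; then one applies the induction hypothesis to $M_0\cdots M_{k-2}$ acting on the resulting two-term combinations. A minor care point: after applying $M_{k-1}$ to $\vec e_j$ we get $\vec e_{k-1}+\vec e_j$, and the $\vec e_j$ part is thereafter only ever acted on by matrices $M_0,\dots,M_{k-2}$ that all fix index $j$, so the $\vec e_j$ survives to the end — this is what guarantees strictness. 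No genuinely hard estimate is involved; the whole argument is a structured induction plus a final componentwise comparison.
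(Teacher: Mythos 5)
Your proposal is correct and takes essentially the same route as the paper: an induction on $k$ whose step isolates the rightmost matrix and uses $M_i\vec e_\ell=\vec e_i+\vec e_\ell$ for $\ell\ne i$ together with linearity, so that $A\vec e_j-A\vec e_0$ equals the corresponding difference for the shorter product, which is nonnegative and nonzero in some component by the induction hypothesis (strictness coming from the $j$-th coordinate, exactly as you note). The explicit closed forms you plan to carry along are not needed — the paper's induction compares the two vectors directly without computing them — but including them would do no harm.
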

\begin{proof} By induction on $k$.  

$k=1:$   \ \ In this case $A = M_0$. Thus  $A\vec e_0 = \vec e_0$ and $A\vec e_j = \vec e_0 +\vec e_j$  for every $j\geq 1$.  Hence for  each $j \geq 1$  we have $A \vec e_0 \leq A\vec e_j$ and  the inequality  is strict in the $j^{th}$ component. 

$1\leq k < d-1: $ \  Denote  $\tilde{A} = M_0M_1\cdots M_{k-1}M_{k}$. Then $\tilde{A} \vec e_j =  A M_{k} \vec e_j = A(\vec e_j+ \vec e_{k} ) =A\vec e_j+ A\vec e_{k}  $ for every $j \neq k$.  In particular, 
$\tilde{A} \vec e_0 = A \vec e_0 + A \vec e_{k}$. 
By the induction hypothesis, $A \vec e_0 \leq A\vec e_j$  for every $j\geq k$ and thus $\tilde{A} \vec e_0 \leq \tilde{A} \vec e_j $ for  every $j\geq k+1$.     
\end{proof}

  \begin{lemma}\label{le:ostraNerovnost} Let $k \in \{1,2,\dots, d-1\}$. Then there exists a permutation matrix $R$ such that the following inequality   
  $$
  \vec x^{\, T} \,M_0M_1 \cdots M_{k-1} M_0\,R \, \vec z \  <  \     \vec x^{\, T} \,M_0M_1 \cdots M_{k-1} M_k \,\vec z 
  $$
holds true for every pair of positive vectors $\vec x$ and $\vec z$  from  $\mathbb{R}^d$. 
  \end{lemma}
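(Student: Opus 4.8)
The plan is to compare the action of the two matrix products $M_0M_1\cdots M_{k-1}M_0$ and $M_0M_1\cdots M_{k-1}M_k$ column by column, using the elementary identities $M_k\vec e_k=\vec e_k$ and $M_k\vec e_j=\vec e_k+\vec e_j$ for $j\neq k$ together with \Cref{le:prvnisloupec}. Write $A=M_0M_1\cdots M_{k-1}$. Then for each letter $j\in\{0,1,\dots,d-1\}$ one computes $AM_k\vec e_j$ and $AM_0\vec e_j$ explicitly: $M_k\vec e_j=\vec e_j+\vec e_k$ for $j\neq k$ and $M_k\vec e_k=\vec e_k$, while $M_0\vec e_j=\vec e_j+\vec e_0$ for $j\neq 0$ and $M_0\vec e_0=\vec e_0$. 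So $AM_k\vec e_j-AM_0\vec e_j$ equals $A\vec e_k-A\vec e_0$ when $j\notin\{0,k\}$, equals $A\vec e_k-A\vec e_0-A\vec e_k=-A\vec e_0$ when $j=k$ (since then $M_k\vec e_k=\vec e_k$ but $M_0\vec e_k=\vec e_k+\vec e_0$), and equals $A\vec e_k+A\vec e_0-A\vec e_0=A\vec e_k$ — wait, I need to be careful: for $j=0$, $M_k\vec e_0=\vec e_0+\vec e_k$ and $M_0\vec e_0=\vec e_0$, so the difference is $A\vec e_k$. By \Cref{le:prvnisloupec}, $A\vec e_k-A\vec e_0\geq \vec 0$ with at least one strictly positive component (since $k\geq 1$, $k\geq k$ qualifies). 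So $M_0M_1\cdots M_{k-1}M_k\vec e_j\geq M_0M_1\cdots M_{k-1}M_0\vec e_j$ componentwise for every $j\neq k$, and the $j=k$ column goes the wrong way.

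The fix is the permutation matrix $R$: the statement allows us to permute the columns of the second product before comparing. I would choose $R$ to be the transposition matrix exchanging coordinates $0$ and $k$, i.e. $R\vec e_0=\vec e_k$, $R\vec e_k=\vec e_0$, $R\vec e_j=\vec e_j$ otherwise. Then $M_0M_1\cdots M_{k-1}M_0R$ acts on $\vec e_j$ as $AM_0\vec e_j$ for $j\notin\{0,k\}$, as $AM_0\vec e_k$ for $j=0$, and as $AM_0\vec e_0$ for $j=k$. Column by column: for $j\notin\{0,k\}$ we already showed $AM_k\vec e_j\geq AM_0\vec e_j$; for $j=0$ we need $AM_k\vec e_0=A(\vec e_0+\vec e_k)\geq AM_0\vec e_k=A(\vec e_0+\vec e_k)$, which holds with equality; for $j=k$ we need $AM_k\vec e_k=A\vec e_k\geq AM_0\vec e_0=A\vec e_0$, which holds by \Cref{le:prvnisloupec}, again with at least one strict component. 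So after inserting $R$, the product $M_0M_1\cdots M_{k-1}M_kR^{-1}$ — equivalently $M_0M_1\cdots M_{k-1}M_0R$ on the left — is dominated entrywise.

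Hence there is a matrix inequality $M_0M_1\cdots M_{k-1}M_0R \leq M_0M_1\cdots M_{k-1}M_k$ (entrywise, all entries of both being nonnegative) and the difference $D=M_0M_1\cdots M_{k-1}M_k-M_0M_1\cdots M_{k-1}M_0R$ is a nonzero nonnegative matrix — in fact its $j=k$ column is $A\vec e_k-A\vec e_0$, which is nonzero. To finish, I contract with positive vectors: for any positive $\vec x,\vec z\in\mathbb R^d$, $\vec x^{\,T}D\vec z=\sum_{a,b}x_a D_{ab}z_b$ is a sum of nonnegative terms at least one of which (coming from the nonzero entry of $D$) is strictly positive, because $x_a>0$ and $z_b>0$ for all indices. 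Therefore $\vec x^{\,T}M_0M_1\cdots M_{k-1}M_0R\,\vec z<\vec x^{\,T}M_0M_1\cdots M_{k-1}M_k\,\vec z$, which is exactly the claim.

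The routine but slightly delicate part is the careful bookkeeping of the six cases ($j\in\{0,k\}$ versus $j\notin\{0,k\}$, and which of $M_0$, $M_k$ is being applied), making sure the transposition $R$ lines up the "bad" column $j=k$ of $M_0$-product against the column $j=0$ and vice versa so that \Cref{le:prvnisloupec} applies in the right direction; I expect that to be the only real obstacle, and it is purely a finite check. One should also note explicitly that for $k=d-1$ the hypothesis $k\in\{1,\dots,d-1\}$ still lets \Cref{le:prvnisloupec} apply since that lemma covers $k\in\{1,\dots,d-1\}$, so no boundary case is lost.
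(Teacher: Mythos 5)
Your proof is correct and takes essentially the same route as the paper: the same transposition $R$ exchanging $0\leftrightarrow k$, with Lemma~\ref{le:prvnisloupec} supplying the strict comparison $A\vec e_0\leq A\vec e_k$. The only difference is bookkeeping — you verify the entrywise inequality $AM_0R\leq AM_k$ column by column and then contract with positive $\vec x,\vec z$, while the paper computes $(M_k-M_0R)\,\vec z=\xi(\vec e_k-\vec e_0)$ directly before multiplying by $A$; both arguments hinge on exactly the same facts.
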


\begin{proof} Set $R$ equal to the matrix of the transposition $\rho$ that exchanges $0\leftrightarrow k$. 
We will show that the value  $v:= \vec x^{\, T}\,M_0M_1 \cdots M_{k-1}(M_k - M_0R)\,\vec z$ is positive. Denote $\vec z =(z_1, z_2,\ldots, z_d)^T$ and $A:=M_0M_1\cdots M_{k-1}$.  
Simple calculations give
$$
(M_k - M_0\,R)\, \vec z = \xi (-\vec e_0+\vec e_k), \quad \text{where}\ \xi = z_2+z_3+\cdots + z_d >0\,. 
$$

Therefore $v = \xi\vec x^{\,T}\,A(-\vec e_0 + \vec e_k)=\xi\vec x^{\,T}\,(-A\vec e_0 + A\vec e_k) $. By Lemma \ref{le:prvnisloupec}, the vector $-A\vec e_0 + A\vec e_k$ has non-negative components and at least one of its components is positive.  As $\vec x$ has all components positive and $\xi >0$, their product $v$ is a positive value. 

\end{proof}

\begin{proposition} Let $N \in \N$ and $i_1, i_2, \ldots, i_N \in \{0,1,\ldots, d-1\}$ be such that  
$$\vec 1^{\,T} M_{i_1} M_{i_2} \cdots M_{i_N}\, \vec 1 = \max \mathcal{S}(d,N), $$
where $\mathcal{S}(d,N)$ is defined by \eqref{eq:mnozinaS}. Then any $d$ consecutive indices in the list $i_1,i_2, \ldots, i_N$ are mutually distinct.   
\end{proposition}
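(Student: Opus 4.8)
The statement asserts that an optimal index sequence $i_1,\dots,i_N$ maximizing $\vec 1^{\,T} M_{i_1}\cdots M_{i_N}\vec 1$ cannot contain a repeated letter within any window of $d$ consecutive positions. The plan is to argue by contradiction: suppose some block $i_{m+1},\dots,i_{m+d}$ has a repetition, and produce another index sequence with a strictly larger value, contradicting maximality. By Lemma~\ref{le:oPermutaci}, conjugating the whole product by a permutation matrix $P$ just relabels the indices via $\pi$, and since $P^T\vec 1=\vec 1$ (permutation matrices are doubly stochastic, or directly $P\vec 1 = \vec 1$), this operation does not change the value $\vec 1^{\,T}M_{i_1}\cdots M_{i_N}\vec 1$. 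So without loss of generality I may relabel inside the offending window to arrange things in a convenient normal form.

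**Key steps.** First I would reduce to a window of length exactly $d$ containing a repeated letter, and then locate the \emph{first} repeated occurrence: there is a shortest factor $i_{m+1},\dots,i_{m+k+1}$ (with $2\le k+1\le d$) in which $i_{m+1},\dots,i_{m+k}$ are pairwise distinct but $i_{m+k+1}\in\{i_{m+1},\dots,i_{m+k}\}$. Using the permutation invariance from the first paragraph, I may relabel so that this sub-block reads $0,1,\dots,k-1$ followed by $i_{m+k+1}=0$; that is, the product contains a factor $M_0M_1\cdots M_{k-1}M_0$. Now write the full product as $\vec 1^{\,T}M_{i_1}\cdots M_{i_m}\,\bigl(M_0M_1\cdots M_{k-1}M_0\bigr)\,M_{i_{m+k+2}}\cdots M_{i_N}\vec 1$, and set $\vec x^{\,T} := \vec 1^{\,T}M_{i_1}\cdots M_{i_m}$ and $\vec z := M_{i_{m+k+2}}\cdots M_{i_N}\vec 1$. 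Both $\vec x$ and $\vec z$ are positive vectors, because each $M_k$ has a strictly positive row and column and $\vec 1$ is positive, so products of such matrices applied to $\vec 1$ (from either side) stay positive. Then Lemma~\ref{le:ostraNerovnost} gives a permutation matrix $R$ with
$$\vec x^{\,T} M_0M_1\cdots M_{k-1}M_0\,R\,\vec z \ < \ \vec x^{\,T} M_0M_1\cdots M_{k-1}M_k\,\vec z.$$
The right-hand side is the value of the modified index sequence obtained by replacing the repeated letter $0$ by the fresh letter $k$ in position $m+k+1$. To finish I would absorb the stray $R$: by Lemma~\ref{le:oPermutaci}, $R$ commutes past the tail $M_{i_{m+k+2}}\cdots M_{i_N}$ at the cost of relabeling those indices by the transposition $\rho$ (exchanging $0\leftrightarrow k$), and then $R^T\vec 1 = \vec 1$ kills it entirely; so the left-hand side above equals $\max\mathcal S(d,N)$ (it is the optimal sequence with the tail relabeled, hence still a member of $\mathcal S(d,N)$ with the same value), while the right-hand side is a strictly larger element of $\mathcal S(d,N)$ — contradiction.

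**Main obstacle.** The routine parts (positivity of $\vec x,\vec z$; the reduction to a minimal repeating block; the relabeling normal form) are straightforward. The delicate point is the bookkeeping around the permutation matrix $R$: one must check that replacing a repeated letter by a fresh one and simultaneously conjugating the tail by $\rho$ produces a genuine element of $\mathcal S(d,N)$ whose value is unchanged from the original maximum, so that the strict inequality of Lemma~\ref{le:ostraNerovnost} actually contradicts maximality rather than comparing incomparable quantities. Once the identity $\vec x^{\,T}M_0\cdots M_{k-1}M_0R\vec z = \max\mathcal S(d,N)$ is pinned down via $P^T M_j P = M_{\rho(j)}$ and $R^T\vec 1=\vec 1$, the argument closes cleanly.
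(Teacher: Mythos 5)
Your overall strategy matches the paper's (locate a minimal repeated window, relabel to the normal form $M_0M_1\cdots M_{k-1}M_0$, apply Lemma~\ref{le:ostraNerovnost} with positive vectors $\vec x,\vec z$, contradict maximality), but the final bookkeeping with the permutation matrix $R$ --- exactly the step you flag as delicate --- does not close as written. With your choice $\vec z = M_{i_{m+k+2}}\cdots M_{i_N}\vec 1$ (the untouched tail), the quantity equal to $\max\mathcal S(d,N)$ is $\vec x^{\,T}M_0\cdots M_{k-1}M_0\,\vec z$, \emph{not} $\vec x^{\,T}M_0\cdots M_{k-1}M_0R\,\vec z$. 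Pushing $R$ through the tail shows that the latter is the value of the index list consisting of the head, the window $0,1,\dots,k-1,0$, and the $\rho$-relabelled tail; this is merely \emph{some} element of $\mathcal S(d,N)$, hence $\le \max\mathcal S(d,N)$. Your parenthetical justification that it is ``the optimal sequence with the tail relabelled, hence \dots with the same value'' is false: relabelling only part of an index sequence changes the value in general (already for $d=2$ one has $\vec 1^{\,T}M_0M_0\vec 1=4$ but $\vec 1^{\,T}M_0M_1\vec 1=5$, and the second list is the first with only its last index relabelled by the transposition $0\leftrightarrow 1$). So your strict inequality only says that one element of $\mathcal S(d,N)$ is smaller than another element, which is no contradiction; the needed identity ``left-hand side $=\max\mathcal S(d,N)$'' is unproved and, with your $\vec z$, generally wrong.

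The repair is precisely the paper's bookkeeping: put the relabelling into $\vec z$ \emph{before} invoking Lemma~\ref{le:ostraNerovnost}. Writing $T:=M_{i_{m+k+2}}\cdots M_{i_N}$ and $\vec z:=R^{T}TR\,\vec 1=M_{\rho(i_{m+k+2})}\cdots M_{\rho(i_N)}\vec 1$, one gets, using $R\vec 1=R^T\vec 1=\vec 1$,
\begin{equation*}
\max\mathcal S(d,N)=\vec x^{\,T}M_0\cdots M_{k-1}M_0\,T\,\vec 1
=\vec x^{\,T}M_0\cdots M_{k-1}M_0\,R\,\vec z
<\vec x^{\,T}M_0\cdots M_{k-1}M_k\,\vec z,
\end{equation*}
and the right-hand side is the value of the list (head, $0,1,\dots,k$, $\rho$-relabelled tail), a genuine element of $\mathcal S(d,N)$ --- contradiction. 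In your version the relabelled tail lands on the wrong (maximum) side of the inequality, which is what forces the unjustified claim. Two minor further points: by minimality of the window the repeated letter must be its \emph{first} letter, which is what allows the normal form to end in $0$ (worth stating explicitly); and $M_k$ has no strictly positive column (its $k$-th column is $\vec e_k$), though positivity of $\vec x$ and $\vec z$ does hold because each $M_j$ is nonnegative with unit diagonal and one full row of ones.
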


\begin{proof}  By contradiction, find the smallest $k \in \{1,2,\dots, d-1\}$ and $m \geq 1$ such that in the list $i_{m},i_{m+1}, \ldots, i_{m+k}$  two indices coincide. Divide  the product of matrices into three parts 

$X := M_{i_1} M_{i_2} \cdots M_{i_{m-1}}$, 

$Y : = M_{i_m}M_{i_{m+1}} \cdots M_{i_{m+k-1}} M_{i_{m+k}}$,  

$Z:=M_{i_{m+k+1}} M_{i_{m+k+2}}\cdots M_{i_N}$.

\medskip

As we take the smallest $k$, necessarily $i_{m}, i_{m+1}, \ldots, i_{m+k-1}$ are mutually distinct and $i_{m+k} = i_{m}$. Consider a~permutation $\pi$  on the set $\{0,1,\ldots, d-1\}$ such that $\pi(i_{m +j}) = j$ for $j \in \{0, 1, \ldots, k-1\}$.  Let $P$ be the matrix of the permutation $\pi$. By Lemma \ref{le:oPermutaci}, 
$$P^TYP=P^T M_{i_m}M_{i_{m+1}} \cdots M_{i_{m+k-1}} M_{i_{m+k}} \,P =  M_0M_1 \cdots M_{k-1}M_0\,.$$
Let $R$ be the permutation matrix from Lemma \ref{le:ostraNerovnost}.  Since $\vec 1^{\, T} P^T = \vec 1^{\,T}$, $PR \vec 1 = \vec 1$,  $P^TP =R^TR = I$, we can write
$$\max\mathcal{S}(d,N) = \vec 1^{\, T}\, X\,Y\,Z \, \vec 1 = \vec 1^{\, T} \,  (P^T X P) \, (P^T Y P\,R)\, (R^T P^T Z P \, R )\,\vec 1\,. $$
Now, we set $\vec x^{\,T} = \vec 1^{\, T} \, P^T X P $  and $\vec z = R^TP^T Z PR \,\vec 1$.
Using this notation, we have  
     $$\max\mathcal{S}(d,N) = \vec x^{\,T } P^TY P R\vec z = \vec x^{\,T }  M_0M_1 \cdots M_{k-1}M_0 R \vec z\,.$$ 
According to Lemma \ref{le:ostraNerovnost}, we obtain
  $$\max\mathcal{S}(d,N)  <  \vec x^{\,T }  M_0M_1 \cdots M_{k-1}M_k  \vec z\,. $$
To derive a contradiction, it suffices to prove that the number $\vec x^{\,T }  M_0M_1 \cdots M_{k-1}M_k  \vec z$ \  is an element of the set $\mathcal{S}(d,N) $.  

Let $\rho$ be the permutation corresponding to the matrix $PR$. Then 

$\vec x^{\,T}=  \vec 1 ^{\, T} P^TXP =\vec 1 ^{\, T}\,M_{\pi(i_1)} M_{\pi(i_2)} \cdots M_{\pi(i_{m-1})}$, 

$\vec z = R^TP^T ZPR \vec 1 = M_{\rho(i_{m+k+1})} \cdots M_{\rho(i_N)}\, \vec 1 $.

Hence $ \vec x^{\,T }  M_0M_1 \cdots M_{k-1}M_k  \vec z $  is an element of the set $ \mathcal{S}(d,N)$  corresponding to the $N$ indices:  ${\pi(i_1)}, {\pi(i_2)},  \ldots, {\pi(i_{m-1})}, 0,1, \ldots, k, {\rho(i_{m+k+1})}, \ldots, {\rho(i_N)} $. 
\end{proof}

The previous proposition states that the maximum of the set $\mathcal{S}(d,N)$ is attained for the AR sequence having the directive sequence $\Delta=(\varphi_{\tt 0}\varphi_{\tt 1}\cdots \varphi_{\tt d-1})^{\omega}$, i.e., for the $d$-bonacci sequence.  
Combining it with \eqref{eq:asymptotika} we have 
\begin{corollary}\label{coro:dBonacciMaxim} Let $d \in \N, d\geq 2$. There exists positive $c_1 >0$  and a sequence $(o_N)_{N\in \N}$ with $\lim\limits_{N\to \infty} o_N = 0$ such that 
$$\max  \mathcal{S}(d,N) = c_1t^N +  o_N.  $$
\end{corollary}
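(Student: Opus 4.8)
The plan is to identify $\max\mathcal{S}(d,N)$ with the quantity $s_N$ attached to the $d$-bonacci sequence, and then to read off the asymptotics from \eqref{eq:explicit}.

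First I would note that $\mathcal{S}(d,N)$ is a finite set, so its maximum is attained, say at indices $i_1,\dots,i_N$. By the Proposition above, every block of $d$ consecutive entries of $i_1,\dots,i_N$ consists of $d$ mutually distinct letters, hence is a permutation of $\{0,1,\dots,d-1\}$; comparing the two overlapping blocks $i_m\cdots i_{m+d-1}$ and $i_{m+1}\cdots i_{m+d}$ forces $i_{m+d}=i_m$, so $i_j=\sigma\bigl((j-1)\bmod d\bigr)$ for a single permutation $\sigma$ of $\{0,1,\dots,d-1\}$ (when $N\le d$ the entries are simply distinct and there is nothing to iterate). Choosing a permutation $\pi$ with $\pi(i_j)=(j-1)\bmod d$ for all $j$, writing $P$ for its matrix and using $P\vec 1=\vec 1$, $\vec 1^{\,T}P^T=\vec 1^{\,T}$, Lemma~\ref{le:oPermutaci} yields
$$\max\mathcal{S}(d,N)=\vec 1^{\,T}M_{i_1}\cdots M_{i_N}\vec 1=\vec 1^{\,T}M_{\pi(i_1)}\cdots M_{\pi(i_N)}\vec 1=\vec 1^{\,T}M_0M_1\cdots M_{N-1}\vec 1=s_N,$$
with the convention $M_n=M_{n\bmod d}$, the last equality being Lemma~\ref{lem:s_n_simplified}. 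One may instead simply invoke the sentence preceding the Corollary, which already asserts that the maximizer corresponds to the $d$-bonacci directive sequence.

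Second, \eqref{eq:explicit} gives $s_N=\sum_{k=1}^d c_k t_k^N$ with $c_k=\frac{(d-1)t_k}{(d+1)t_k-2d}$, where $t_1=t>1$ and $|t_k|<1$ for $k\ge2$ because $p(x)=x^d-x^{d-1}-\cdots-x-1$ is the minimal polynomial of a Pisot number. Hence $o_N:=s_N-c_1t^N=\sum_{k=2}^d c_kt_k^N$ tends to $0$, which is precisely \eqref{eq:asymptotika}, and therefore $\max\mathcal{S}(d,N)=c_1t^N+o_N$. It only remains to verify $c_1>0$: the numerator $(d-1)t$ is positive since $d\ge2$ and $t>1$, while Property~1 ($t>2-2^{-(d-1)}$) gives
$$(d+1)t-2d>(d+1)\bigl(2-2^{-(d-1)}\bigr)-2d=2-(d+1)2^{-(d-1)}\ge0,$$
the last inequality because $d+1\le 2^d$ for all $d\ge1$; so the denominator is positive and $c_1>0$.

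I do not foresee a real obstacle. The only step beyond bookkeeping is the reduction $\max\mathcal{S}(d,N)=s_N$, and that is elementary: the periodicity of an optimal index list is immediate from the Proposition, and the invariance of the value $\vec 1^{\,T}M_{i_1}\cdots M_{i_N}\vec 1$ under a global relabelling of the indices is Lemma~\ref{le:oPermutaci}. The remainder is the already-proved expansion \eqref{eq:asymptotika} together with the one-line positivity check for $c_1$ based on Property~1.
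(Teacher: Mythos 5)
Your proposal is correct and follows the paper's route: invoke the Proposition to identify the maximizer of $\mathcal{S}(d,N)$ (up to a relabelling via Lemma~\ref{le:oPermutaci}) with the $d$-bonacci product, so that $\max\mathcal{S}(d,N)=s_N$, and then read the asymptotics off \eqref{eq:explicit}/\eqref{eq:asymptotika}. You merely spell out two details the paper leaves implicit, namely the periodicity of the optimal index list and the positivity of $c_1$, both verified correctly.
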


\section{Repetition threshold of $d$-ary episturmian sequences}
In this section we state  our  main result as Theorem \ref{th:main}.  The key ingredient  of its  proof  is the  following proposition.  
\medskip

\begin{proposition}\label{pro:d-bonacciBest} Let $\uu$ be a $d$-ary AR sequence. Then $E^*(\uu) \geq E^*(\uu_d)$.    
    
\end{proposition}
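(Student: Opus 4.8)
The plan is to compare, bispecial factor by bispecial factor, an arbitrary $d$-ary AR sequence $\uu$ with the $d$-bonacci sequence $\uu_d$, using the matrix formulas already developed. By Theorem~\ref{thm:formulaE*andE}, $E^*(\uu) = 1 + \limsup_{N\to\infty} \frac{s_N}{s_N - s_{N-1}}$, where $s_N = \vec 1^{\,T} M_{\psi_1}\cdots M_{\psi_N}\vec 1$ for the directive sequence $\Delta = (\psi_n)$ of $\uu$, and similarly $E^*(\uu_d) = 1 + \frac{t}{t-1}$ where, by Corollary~\ref{coro:dBonacciMaxim}, the corresponding quantity for $\uu_d$ is $\max\mathcal S(d,N) = c_1 t^N + o_N$. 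Since the function $x \mapsto 1 + \frac{x}{x-1}$ is decreasing on $(1,\infty)$, the inequality $E^*(\uu)\geq E^*(\uu_d)$ will follow once I show that along a suitable subsequence of $N$, the ratio $\frac{s_N}{s_N - s_{N-1}}$ for $\uu$ is at least $\frac{t}{t-1} - \varepsilon$ for every $\varepsilon>0$; equivalently, that $s_{N-1}/s_N$ does not stay bounded away from $1/t$ from below, i.e. $\limsup_N s_{N-1}/s_N \geq 1/t$.

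First I would record the two basic facts about the sequence $(s_N)$ attached to $\uu$: it is non-decreasing (each $M_{\psi_i}$ has non-negative integer entries and at least the identity on the diagonal plus one extra $1$ per column, so $M_{\psi_i}\vec v \geq \vec v$ componentwise for $\vec v\geq \vec 0$), and each $s_N \leq \max\mathcal S(d,N)$ by the very definition of $\mathcal S(d,N)$ in \eqref{eq:mnozinaS}. Next, the key growth lower bound: because $\uu$ is AR, every morphism $\varphi_{\tt 0},\dots,\varphi_{\tt d-1}$ occurs infinitely often in $\Delta$, so for infinitely many $N$ the last $d$ morphisms $\psi_{N-d+1},\dots,\psi_N$ — or at least a length-$d$ window ending not far before $N$ — realize all $d$ distinct letters, i.e. behave like one full $d$-bonacci block $\varphi_{\tt 0}\cdots\varphi_{\tt d-1}$ up to a permutation. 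Using Lemma~\ref{le:oPermutaci}, $M_{\psi_{N-d+1}}\cdots M_{\psi_N} = P^T M^{?}\cdots P$ is conjugate to a product realizing $M$-like growth, which forces $s_N \geq \lambda\, s_{N-d}$ for a constant $\lambda>0$ independent of $N$ along those $N$; more precisely I would aim to show $s_N \geq (2 - o(1))\, s_{N-1}$ cannot persist, so the growth rate of $(s_N)$ is at most $t$ in the $\limsup$ sense.

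The cleanest route is the following dichotomy. If $(s_N)$ grows with exponential rate strictly greater than $t$ along some subsequence, I derive a contradiction with $s_N\leq \max\mathcal S(d,N) = c_1 t^N + o_N$, which caps the rate at exactly $t$. Hence $\limsup_N (s_N)^{1/N} \leq t$. On the other hand $(s_N)$ is non-decreasing and tends to infinity (the sequence $\uu$ is aperiodic, so bispecial factors grow unboundedly, forcing $s_N\to\infty$ by $s_N = (d-1)|b_N| + d$). A non-decreasing sequence tending to infinity with $\limsup (s_N)^{1/N}\leq t$ must satisfy $\limsup_N s_{N-1}/s_N \geq 1/t$: indeed if $s_{N-1}/s_N \leq 1/t - \delta$ for all large $N$, then $s_N \geq (t + t^2\delta)^{\,N - N_0} s_{N_0}$ eventually, contradicting the rate bound. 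Plugging $\limsup s_{N-1}/s_N\geq 1/t$ into $\frac{s_N}{s_N - s_{N-1}} = \frac{1}{1 - s_{N-1}/s_N}$ and taking $\limsup$ gives $\limsup_N \frac{s_N}{s_N-s_{N-1}} \geq \frac{1}{1-1/t} = \frac{t}{t-1}$, hence $E^*(\uu) \geq 1 + \frac{t}{t-1} = E^*(\uu_d)$.

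I expect the main obstacle to be the step asserting that the growth rate of $(s_N)$ for an arbitrary AR directive sequence is capped at $t$; the clean statement is exactly Corollary~\ref{coro:dBonacciMaxim} ($s_N \leq \max\mathcal S(d,N) = c_1 t^N + o_N$), so in fact that obstacle is already removed by the work of Section~5 — the remaining argument is the elementary real-analysis lemma that a non-decreasing unbounded sequence with sub-$t$ exponential growth has consecutive-ratio $\limsup$ at least $1/t$, together with monotonicity and unboundedness of $(s_N)$, both of which are immediate from the structure of the matrices $M_{\psi_i}$ and aperiodicity of $\uu$. The only point requiring care is that $\uu$ is AR and aperiodic so that $s_N \to \infty$ genuinely holds (a periodic episturmian sequence would have a bounded directive sequence and bounded bispecial factors), which is guaranteed here since the proposition hypothesizes $\uu$ is AR, hence aperiodic.
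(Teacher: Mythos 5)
Your proposal is correct and is essentially the paper's own argument: both bound $s_N$ by $\max\mathcal{S}(d,N)=c_1t^N+o_N$ via Corollary~\ref{coro:dBonacciMaxim} to cap $\limsup\sqrt[N]{s_N}$ at $t$, convert that root-growth bound into information on consecutive ratios, and feed it into Theorem~\ref{thm:formulaE*andE}. The elementary lemma you prove by hand (a positive sequence with $\limsup\sqrt[N]{s_N}\leq t$ has $\limsup s_{N-1}/s_N\geq 1/t$) is exactly the half of Cauchy's ratio--root inequality $\liminf s_N/s_{N-1}\leq\limsup\sqrt[N]{s_N}$ that the paper cites, so the two proofs coincide up to this rephrasing (your speculative middle paragraph about length-$d$ windows is unused and can be dropped).
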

\begin{proof}
 According to Theorem~\ref{thm:formulaE*andE}, we have for any $d$-ary AR sequence $\uu$
$$\begin{array}{rcl}
E^*(\uu)&=&1+\limsup\tfrac{s_N-s_{N-1}+s_{N-1}}{s_N-s_{N-1}}\\
&=&2+\limsup\frac{1}{\tfrac{s_N}{s_{N-1}}-1}\\
&=&2+\frac{1}{\liminf\tfrac{s_N}{s_{N-1}}-1}\,.
\end{array}$$
Now we use  the Cauchy's theorem on limits: Let $(a_N)$ be a~positive sequence, then 
$$\liminf \frac{a_N}{a_{N-1}}\leq  \liminf \sqrt[N]{a_N}\leq \limsup \sqrt[N]{a_N}  \leq \limsup \frac{a_N}{a_{N-1}} .$$
Using Corollary \ref{coro:dBonacciMaxim} we deduce 
$$ \liminf\frac{s_N}{s_{N-1}}\leq  \limsup \sqrt[N]{s_N} \leq   \limsup \sqrt[N]{\max \mathcal{S}(d,N)} = t.$$
Altogether, 
$$E^*(\uu) = 2+\frac{1}{\liminf\tfrac{s_N}{s_{N-1}}-1} \geq 2+ \frac{1}{t-1} = E^*(\uu_d)\,, $$  where the last equality follows by Theorem \ref{thm:EandE*}.     
\end{proof}

\begin{theorem}\label{th:main}
The repetition threshold  and the asymptotic repetition threshold of the class of $d$-ary episturmian sequences are  equal to $2+\frac{1}{t-1}$, where $t>1$ is the unique positive root of the polynomial $x^d-x^{d-1}-\cdots -x-1$.
It is reached by the $d$-bonacci sequence.
\end{theorem}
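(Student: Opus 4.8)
The plan is to derive Theorem~\ref{th:main} from the results already assembled, handling the lower bound and the upper bound separately. For the \emph{lower bound} $\RT(C_d) \geq \RT^*(C_d) \geq 2+\frac{1}{t-1}$, I would first reduce an arbitrary $d$-ary episturmian sequence $\uu$ to the AR case. Recall from the preliminaries that every episturmian sequence shares a language (hence a critical exponent and asymptotic critical exponent) with a standard one, and that an aperiodic standard episturmian sequence that is not AR satisfies $\uu = \psi_1\cdots\psi_n(\vvv)$ for some $d'$-ary AR sequence $\vvv$ with $2 \le d' < d$; periodic episturmian sequences have infinite critical exponent and are irrelevant to the infimum. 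Since each $\varphi_{\tt i}$ is non-erasing and AR sequences have well-defined uniform letter frequencies, Lemma~\ref{lem:morphismE} gives $E^*(\uu) = E^*(\psi_1\cdots\psi_n(\vvv)) \geq E^*(\vvv) \geq E^*(\uu_{d'}) \geq E^*(\uu_d)$, where the middle inequality is Proposition~\ref{pro:d-bonacciBest} applied on the alphabet of size $d'$ and the last is the monotonicity $E^*(\uu_{d'}) \ge E^*(\uu_d)$ established in Remark~\ref{rem:RT} (equivalently, $t(d')\le t(d)$ so $\frac{1}{t(d')-1}\ge\frac{1}{t(d)-1}$). For genuinely $d$-ary AR sequences the bound is exactly Proposition~\ref{pro:d-bonacciBest}. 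Thus every $d$-ary episturmian sequence $\uu$ satisfies $E(\uu) \geq E^*(\uu) \geq 2+\frac{1}{t-1}$, so both $\RT(C_d)$ and $\RT^*(C_d)$ are at least $2+\frac{1}{t-1}$.

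For the \emph{upper bound and the attainment claim}, I would simply exhibit the $d$-bonacci sequence $\uu_d$ as a witness: it is a $d$-ary AR sequence, hence episturmian, and by Theorem~\ref{thm:EandE*} it has $E(\uu_d) = E^*(\uu_d) = 2+\frac{1}{t-1}$. Therefore $\RT(C_d) \leq E(\uu_d) = 2+\frac{1}{t-1}$ and likewise $\RT^*(C_d) \leq E^*(\uu_d) = 2+\frac{1}{t-1}$. Combining with the lower bound yields $\RT(C_d) = \RT^*(C_d) = 2+\frac{1}{t-1}$, with the infimum attained (not merely approached) by $\uu_d$, which is exactly the statement of the theorem. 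One should note the two edge cases: for $d=2$ the claim reads $2+\tau$ and coincides with the known Sturmian value of Carpi--de Luca and Vandeth, and the polynomial $x^2-x-1$ has root $\tau$, so the formula is consistent.

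The substantive work is entirely in the pieces already proved; the only place where real care is needed in writing this final proof is the reduction of the non-AR episturmian case. I would want to be explicit that when $\uu = \psi_1\cdots\psi_n(\vvv)$ with $\vvv$ a $d'$-ary AR sequence, Lemma~\ref{lem:morphismE} is applied with the base sequence $\vvv$ (whose letter frequencies exist because AR sequences are uniformly recurrent with a primitive enough structure — more precisely, linearly recurrent, hence they have uniform letter frequencies) and the morphism $\psi = \psi_1\cdots\psi_n$, which is a composition of non-erasing morphisms and therefore non-erasing. It is also worth flagging that passing from $\uu$ to its standard counterpart does not change $\mathcal{L}(\uu)$, hence changes neither $E$ nor $E^*$, so it is harmless to assume $\uu$ standard from the outset; and that the periodic case contributes $+\infty$ and can be dismissed in one line. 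With these points spelled out, the proof is short.

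\begin{proof}
First we show $\RT^*(C_d)\ge 2+\frac{1}{t-1}$. Let $\uu$ be any $d$-ary episturmian sequence. Since $E^*(\uu)$ depends only on $\mathcal L(\uu)$, we may assume $\uu$ is standard. If $\uu$ is eventually periodic then it is periodic and $E^*(\uu)=\infty$, so such $\uu$ do not affect the infimum. If $\uu$ is aperiodic and AR, then $E^*(\uu)\ge E^*(\uu_d)=2+\frac{1}{t-1}$ by Proposition~\ref{pro:d-bonacciBest} and Theorem~\ref{thm:EandE*}. If $\uu$ is aperiodic but not AR, then $\uu=\psi(\vvv)$ where $\psi=\psi_1\cdots\psi_n$ is a non-erasing morphism and $\vvv$ is a $d'$-ary AR sequence with $2\le d'<d$. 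As $\vvv$ is (linearly) uniformly recurrent, the uniform frequency of every letter in $\vvv$ exists, so Lemma~\ref{lem:morphismE} gives $E^*(\uu)=E^*(\psi(\vvv))\ge E^*(\vvv)$. By Proposition~\ref{pro:d-bonacciBest} applied over the alphabet of size $d'$ and Theorem~\ref{thm:EandE*}, $E^*(\vvv)\ge E^*(\uu_{d'})=2+\frac{1}{t(d')-1}$, and by Remark~\ref{rem:RT} the sequence $\bigl(E^*(\uu_k)\bigr)_k$ is decreasing in $k$, so $E^*(\vvv)\ge E^*(\uu_d)=2+\frac{1}{t-1}$. In all cases $E(\uu)\ge E^*(\uu)\ge 2+\frac{1}{t-1}$, hence $\RT(C_d)\ge\RT^*(C_d)\ge 2+\frac{1}{t-1}$.

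For the reverse inequalities, the $d$-bonacci sequence $\uu_d$ is a $d$-ary AR sequence, hence episturmian, and by Theorem~\ref{thm:EandE*} satisfies $E(\uu_d)=E^*(\uu_d)=2+\frac{1}{t-1}$. Therefore $\RT(C_d)\le E(\uu_d)=2+\frac{1}{t-1}$ and $\RT^*(C_d)\le E^*(\uu_d)=2+\frac{1}{t-1}$. Combining with the lower bound, $\RT(C_d)=\RT^*(C_d)=2+\frac{1}{t-1}$, and this common value is attained by $\uu_d$.
\end{proof}
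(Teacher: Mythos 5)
Your proposal is correct and follows essentially the same route as the paper: exhibit $\uu_d$ via Theorem~\ref{thm:EandE*} for the upper bound, and for the lower bound split into the periodic, AR, and aperiodic non-AR cases, using Proposition~\ref{pro:d-bonacciBest}, Lemma~\ref{lem:morphismE} and the monotonicity from Remark~\ref{rem:RT}. Your extra remarks (reduction to the standard sequence, existence of letter frequencies for the AR sequence $\vvv$) only make explicit hypotheses the paper leaves implicit, so there is no substantive difference.
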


\begin{proof}
By the definitions of critical exponent and asymptotic critical exponent,   $E(\uu)\geq E^*(\uu)$ for any sequence $\uu$.    By Theorem \ref{thm:EandE*}, $E^*(\uu_d) = E(\uu_d) = 2+\frac{1}{t-1}$. Consequently, to  prove the theorem we have to show $E^*(\uu)\geq E^*(\uu_d)$ for every $d$-ary episturmian sequence $\uu$. Let us discuss three cases: 
\begin{itemize}
    \item If $\uu$ is an  AR sequence, then  $E^*(\uu) \geq E^*(\uu_d)$ by  Proposition \ref{pro:d-bonacciBest}.    
    \item If $\uu$ is periodic, then obviously $E^*(\uu)=\infty > E^*(\uu_d)$.
    \item 
If $\uu$ is aperiodic and not an AR sequence, then $\uu = \psi(\vvv)$ for some non-erasing morphism $\psi$ and $d'$-ary AR sequence $\vvv$ with  $2\leq d '< d$.  By Lemma \ref{lem:morphismE}, Proposition \ref{pro:d-bonacciBest} and Remark \ref{rem:RT}, we have
$E^*(\uu) \geq E^*(\vvv) \geq  E^*(\uu_{d'}) >  E^*(\uu_d)$.  
\end{itemize} 

\end{proof}

\section{Comments}

In the class $C_d$ of $d$-ary episturmian sequences, the  repetition threshold and the asymptotic repetition threshold are attained on the $d$-bonacci sequence. 
For $d=2$, Carpi and de Luca showed \cite{CaDeLu00} that the repetition threshold $2+\frac{1+\sqrt{5}}{2}$ is reached on 6 sequences in the class $C_2$, one of them is of course the Fibonacci sequence. Moreover, there are two sequences with the critical exponent equal to $11/3$ and all other ones have already the critical exponent $\geq 4$.

It would be interesting to find an analogy of their result even for $d\geq 3$. In addition, it is not clear how the structure of the set $\{E(\uu): \uu \in C_d \}$ looks like. In particular, what is the second, third, etc. smallest element of this set, resp. whether this set has accumulation points and if yes, what is the smallest one among them.

For the class of binary sequences rich in palindromes, where $RT(C_2) = 2+ \frac{\sqrt{2}}{2} \sim 2.707$, this problem is also partially solved: Currie, Mol and Rampersad~\cite{CuMoRa2020} described all rich sequences with the critical exponent smaller than $2.8$.

This is in contrast to the results for general sequences. As shown in \cite{CuRa08} for $d=2$ and in \cite{Vas11} for $d=3$, each $\alpha \geq RT(d)$ is the critical exponent of a $d$-ary sequence.

As we have already mentioned, $d$-ary episturmian sequences are rich in palindromes, in particular, the $d$-bonacci sequence is rich. Hence, on one hand, for the class $C_d$ of sequences rich in palindromes we can conclude $RT(C_d) \leq  E(\uu_d)$. This bound on the repetition threshold was mentioned already by Vesti in \cite{Vesti2017}. Due to the ambiguity in terminology, Vesti swapped the value $E^*(\uu_d)$ (deduced by Glen and Justin) for the value $E(\uu_d)$. Since here we have proved that these two values coincide, Vesti's bound  remains valid.  On the other hand, we have the lower bound $RT(C_d)\geq RT^*(C_d)\geq 2$ since every rich sequence contains infinitely many overlapping factors, see~\cite{PeSt}.

\section*{Acknowledgements}

The authors  acknowledge financial support of M\v SMT by  founded project  \\  CZ.02.1.01/0.0/0.0/16\_019/0000778.

\section{Appendix}
\begin{lemma}\label{le:appendix} Let $d\geq 2$ and $(s_N)$ be the sequence given by the $d$-bonacci recurrence relation
$$s_N=s_{N-1}+s_{N-2}+\dots +s_{N-d} \quad \text{for $N\geq d$}$$
with the initial values $$s_N=(d-1)2^N+1 \quad \text{for} \ \ N \in \{0,1,\dots, d-1\}\,. $$
Denote  $t_1, t_2, \dots, t_d$   zeros  of the polynomial  $p(x)=x^d-x^{d-1}-\dots -x-1$. Then    $$s_N=\sum_{k=1}^d c_k t_k^N, \quad \text{ where} \ \  c_k=\frac{(d-1)t_k}{(d+1)t_k-2d}\,.$$
\end{lemma}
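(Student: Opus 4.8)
\emph{Proof proposal.} The plan is to pass to the generating function $S(x)=\sum_{N\ge 0}s_N x^N$ and read off the $c_k$ from a partial fraction decomposition. Put $q(x)=1-x-x^2-\cdots-x^d$. First I would note that the coefficient of $x^N$ in $S(x)q(x)$ is $s_N-s_{N-1}-\cdots-s_{N-d}$ (with $s_j=0$ for $j<0$), which vanishes for every $N\ge d$ by the recurrence; hence $A(x):=S(x)q(x)$ is a polynomial of degree at most $d-1$ and $S(x)=A(x)/q(x)$. The coefficient of $x^N$ in $A(x)$ for $0\le N\le d-1$ equals $s_N-\sum_{j=0}^{N-1}s_j$; substituting $s_j=(d-1)2^j+1$ and summing the geometric series gives $A_N=d-N$, so $A(x)=\sum_{N=0}^{d-1}(d-N)x^N$. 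A one-line computation then yields the identity $(1-x)A(x)=d-x-x^2-\cdots-x^d=(d-1)+q(x)$, i.e. $A(x)=\dfrac{(d-1)+q(x)}{1-x}$; this is the identity that makes everything below collapse.

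Second, I would factor $q$. Since $q(x)=x^d p(1/x)$, the roots of $q$ are the reciprocals $1/t_1,\dots,1/t_d$, and they are pairwise distinct because $p$, being the minimal polynomial of a Pisot number by \cite{Ba1951}, is irreducible over $\Q$ and therefore separable. As $q$ and $\prod_{k=1}^d(1-t_k x)$ are polynomials of degree $d$ with the same roots and the same value $1$ at $x=0$, they coincide: $q(x)=\prod_{k=1}^d(1-t_k x)$. Consequently $S(x)=\sum_{k=1}^d\frac{c_k}{1-t_k x}$ with $c_k=\dfrac{A(1/t_k)}{\prod_{j\ne k}(1-t_j/t_k)}=\dfrac{-t_k\,A(1/t_k)}{q'(1/t_k)}$ (the second form obtained by differentiating $q(x)=\prod_j(1-t_j x)$ and using that the roots are simple, so $q'(1/t_k)\ne 0$). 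Expanding each summand as a geometric series gives $s_N=\sum_{k=1}^d c_k t_k^N$, so it only remains to compute $c_k$.

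Finally, I would evaluate the two quantities. From $A(x)=\frac{(d-1)+q(x)}{1-x}$ and $q(1/t_k)=0$ we get $A(1/t_k)=\frac{(d-1)t_k}{t_k-1}$. Differentiating the identity $(1-x)q(x)=1-2x+x^{d+1}$ gives $(1-x)q'(x)-q(x)=-2+(d+1)x^d$, hence $q'(1/t_k)=\frac{-2+(d+1)t_k^{-d}}{1-t_k^{-1}}$; using Property~2 in the form $t_k^{-d}=2-t_k$, the numerator simplifies to $2d-(d+1)t_k$, so $q'(1/t_k)=\frac{t_k\bigl(2d-(d+1)t_k\bigr)}{t_k-1}$. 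Substituting and cancelling,
$$c_k=\frac{-t_k\cdot (d-1)t_k/(t_k-1)}{t_k\bigl(2d-(d+1)t_k\bigr)/(t_k-1)}=\frac{-(d-1)t_k}{2d-(d+1)t_k}=\frac{(d-1)t_k}{(d+1)t_k-2d},$$
which is exactly the claimed value. The only real work is the bookkeeping in deriving $A(x)$ and the two evaluations at $x=1/t_k$; there is no conceptual obstacle once the identity $(1-x)A(x)=(d-1)+q(x)$ is in place. (Alternatively one could just verify that the proposed closed form satisfies the recurrence — automatic, since each $t_k^N$ does — and the $d$ initial conditions, but checking those conditions requires evaluating the same power sums and is no shorter.) \hfill$\square$
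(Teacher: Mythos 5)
Your proposal is correct --- I checked the key computations ($A_N=d-N$, the identity $(1-x)A(x)=(d-1)+q(x)$, the evaluations $A(1/t_k)=\frac{(d-1)t_k}{t_k-1}$ and $q'(1/t_k)=\frac{t_k(2d-(d+1)t_k)}{t_k-1}$, and the final cancellation) and they all hold --- but it is a genuinely different route from the paper's. The paper invokes the standard theory of linear recurrences with distinct characteristic roots to get the form $s_N=\sum_k c_k t_k^N$ up front, then determines the $c_k$ by solving the Vandermonde system coming from the initial conditions via Cramer's rule, evaluating ratios of Vandermonde determinants as products $\prod_{j\geq 2}(2-t_j)$, $\prod_{j\geq 2}(1-t_j)$, $\prod_{j\geq 2}(t_1-t_j)$ through identities for $p$, $p'$ and $(x-1)p(x)$. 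You instead work with the generating function $S(x)$ and the reciprocal polynomial $x^dp(1/x)$: the recurrence makes $S(x)$ rational with numerator $A(x)$ encoding the initial data, and partial fractions deliver both the exponential form and the coefficients in one stroke, with the initial-condition bookkeeping collapsing via $(1-x)A(x)=(d-1)+q(x)$. What your approach buys is the avoidance of determinant manipulations (the only analytic inputs are simplicity of the roots and the same identity $t_k^{-d}=2-t_k$ that the paper uses, plus $t_k\neq 1$ since $p(1)=1-d\neq 0$); what the paper's buys is that existence of the closed form is quoted rather than re-derived, so all work is concentrated in one explicit coefficient computation, which by symmetry in the roots immediately gives every $c_k$ from $c_1$. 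One cosmetic caveat: your $q(x)=1-x-\cdots-x^d$ clashes with the paper's $q(x):=(x-1)p(x)$ used in its appendix proof, so if your argument were inserted alongside it the symbol should be renamed; also, to be fully formal, the partial-fraction step is an identity of rational functions which agrees with the formal power series $S(x)$ because $q(0)=1\neq 0$, a point worth one sentence.
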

\begin{proof}
Since all roots of the polynomial $p(x)$ are mutually distinct, it is well-known from the theory of linear recurrence relations that there exist constants $c_1, c_2, \cdots, c_d$, depending on the initial conditions, such that $s_N=\sum_{k=1}^d c_k t_k^N$.

To determine the constants $c_k$, we have to solve the following system of linear algebraic equations for $c_1, \dots, c_d$:
$$\left(\begin{array}{cccc}
     1&1&\dots &1  \\
     t_1&t_2&\dots &t_d\\
     t_1^2 & t_2^2 & \dots & t_d^2\\
     \vdots & \vdots & \ddots & \vdots \\
     t_1^{d-1} & t_2^{d-1} & \dots & t_d^{d-1}
\end{array}\right)\left(\begin{array}{c} c_1 \\ c_2 \\ c_3 \\ \vdots \\ c_d \end{array}\right)=(d-1)\left(\begin{array}{c} 1 \\ 2^1 \\ 2^2 \\ \vdots \\ 2^{d-1} \end{array}\right)+\left(\begin{array}{c} 1 \\ 1 \\ 1 \\ \vdots \\ 1 \end{array}\right)\,.$$

Using the Cramer's rule we get 
$$c_1 = \frac{(d-1)V(2, t_2,t_3, 
\ldots, t_d) + V(1,t_2,t_3, \ldots, t_d)}{V(t_1,t_2, t_3, \ldots, t_d)}\,, $$
where $V(\alpha_1, \alpha_2, 
\ldots, \alpha_d)$ denotes  the Vandermonde determinant of complex numbers $\alpha_1, \alpha_2, \ldots, \alpha_d$. It is well known that 
$$ V(\alpha_1, \alpha_2, 
\ldots, \alpha_d) = \prod_{d\geq k > \ell \geq 1} (\alpha_{k} - \alpha_\ell)\,.
$$
Hence, $$\begin{array}{rcl}
c_1&=&\cfrac{\displaystyle(d-1)\prod_{j\geq 2}(t_j-2) + \prod_{j\geq 2}(t_j-1)}{\displaystyle\prod_{j\geq 2}(t_j-t_1)}\,.
\end{array}$$
To simplify the products in the above fraction, we use some useful and easy to check properties of the characteristic polynomial $p(x)$, the polynomial $q(x) := (x-1)p(x)$ and their derivatives:     
\begin{enumerate}
\item $p(x)=\prod_{j=1}^d (x-t_j)$;
\item $q(x)=x^{d+1}-2x^d+1$; 
\item $q'(x) = p(x) + (x-1)p'(x) = (d+1) x^d - 2dx^{d-1}$;
\item  $t_k^{-d}={2-t_k}$ as $t_k$ is a root of $q(x)$ for every $k=1, 2, \ldots, d$;  
\item $\prod_{j=1}^d (1-t_j) =p(1)= 1-d$;   
\item $p'(t_1)=\prod_{j\geq 2}(t_1-t_j)$; 
\item $q'(t_1) = (t_1-1)p'(t_1)=(d+1)t_1^d-2dt_1^{d-1}$.
\end{enumerate}
Applying the above properties gives us

$\prod\limits_{j\geq 2}(2-t_j) = \prod\limits_{j\geq 2}t_j^{-d} =  t_1^{\,d} \Bigl(\prod\limits_{j\geq 1}t_j\Bigr)^{-d} =t_1^{\,d}\, {\bigl((-1)^d p(0)\bigr)}^{-d} = t_1^{\,d}\,; $ 

\medskip

 $\prod\limits_{j\geq 2}(1-t_j)= (1-t_1)^{-1}\prod\limits_{j\geq 1}(1-t_j) = (d-1)(t_1-1)^{-1}$\,;

\medskip

  $\prod\limits_{j\geq 2}(t_1-t_j)= p'(t_1) = (t_1-1)^{-1}\, q'(t_1) = {t_1^{d-1}}(t_1-1)^{-1}\, \bigl((d+1)t_1 - 2d \bigr)$\,.

\medskip

\noindent Inserting the simplified expressions to the formula for $c_1$, we obtain $$c_1 = (d-1)\frac{(t_1-1)t_1^d +1 }{{t_1^{d-1}}\, \bigl((d+1)t_1 - 2d \bigr)}=(d-1)\frac{t_1^d }{{t_1^{d-1}}\, \bigl((d+1)t_1 - 2d \bigr)}=\frac{(d-1)t_1 }{(d+1)t_1 - 2d }\,.$$
 
Since the role of coefficients $c_k$ is symmetric with respect to the roots $t_k$, we get $c_k$ from $c_1$ just by replacing $t_1$ by $t_k$. 
\end{proof}

\end{document}